\documentclass[11pt]{amsart}
\usepackage[colorlinks=true]{hyperref}
\usepackage{amsmath}
\usepackage{amssymb}
\usepackage{graphicx}
\usepackage{xcolor}
\usepackage{mathtools}
\newtheorem{theorem}{Theorem}[section]

\theoremstyle{definition}

\newtheorem{remark}[theorem]{Remark}

\numberwithin{equation}{section}
\newcommand{\ul}[1]{\underline{#1}}


\title[Jordan--Moore--Gibson--Thompson equation]{Vanishing relaxation time limit of the Jordan--Moore--Gibson--Thompson wave equation with Neumann and absorbing boundary conditions}

\author[Barbara Kaltenbacher]{Barbara Kaltenbacher}
\address[B. Kaltenbacher]{Institute of Mathematics, Alpen-Adria-Universit\"at Klagenfurt, Universit\"atsstra\ss e 65-67, 9020 Klagenfurt, Austria}
\email{{barbara.kaltenbacher@aau.at}}

\author[Vanja Nikoli\'c]{Vanja Nikoli\'c}
\address[V. Nikoli\'c]{Chair of Numerical Mathematics, Department of Mathematics, Technical University of Munich, 
	Boltzmannstra\ss e 3, 85748 Garching, Germany}
\email{\tt vanja.nikolic@ma.tum.de}

\keywords{third order in time PDE, energy method, singular limit}

\subjclass[2010]{35L72, 35L77, 35L80, 35B40, 49K20, 49Q10}


\begin{document}

\begin{abstract}
We study the Jordan--Moore--Gibson--Thompson (JMGT) equation, a third order in time wave equation that models nonlinear sound propagation, in the practically relevant setting of Neumann and absorbing boundary conditions. In the analysis, we pay special attention to dependencies on the coefficient $\tau$ of the third order time derivative that plays the physical role of relaxation time. We establish local in time well-posedness and derive energy bounds that can be made independent of $\tau$ under appropriate conditions. This fact allows us to pass to the limit $\tau\to0$ and recover solutions of a classical model in nonlinear acoustics, the Westervelt equation, as singular limits of solutions to the JMGT equation.
\end{abstract}

\maketitle

\section{Introduction}
Driven by applications, particularly of high-intensity ultrasound~\cite{kennedy2003high, wu2001pathological, yoshizawa2009high}, the field of modeling and analysis of nonlinear acoustics has recently found much interest. In this paper, we analyze a nonlinear third order in time acoustic wave equation that has been put forward in~\cite{JordanMaxwellCattaneo14,JordanMaxwellCattaneo09} and studied, along with its linearization, in~\cite{DellOroPata,KLM12_MooreGibson,KLP12_JordanMooreGibson,kaltenbacher2019jordan,LasieckaWang15b, LasieckaWang15a,
LiuTriggiani13,MarchandMcDevittTriggiani12,PellicerSolaMorales}.

\subsection{Problem setting and modeling}
A classical model of nonlinear sound propagation is the Westervelt equation~\cite{Westervelt63}
\begin{equation} \label{Westervelt0}
\psi_{tt}-c^2\Delta \psi - \delta\Delta \psi_t = k(\psi_t^2)_t,
\end{equation}
where $c>0$ is the speed of sound, $\delta>0$ the diffusivity of sound, $k$ a parameter quantifying the nonlinearity of the equation, and $\psi$ the acoustic velocity potential. Acoustic potential is related to the acoustic particle velocity $\vec{v}$ via $\vec{v}=-\nabla\psi$, and to the acoustic pressure $p$ via $p=\rho_0\psi_t$, where $\rho_0$ denotes the mean mass density. \\
\indent To overcome the infinite signal speed paradox which is unnatural in wave propagation, Fourier's law is replaced in the derivation of acoustic models by the Maxwell-Cattaneo law; cf.~\cite{JordanMaxwellCattaneo14}. This action leads to  a third order in time equation containing the (finite) relaxation time as a parameter $\tau$:
\begin{equation} \label{WesterveltMC_0}
\tau\psi_{ttt}+\psi_{tt}-c^2\Delta \psi - b\Delta \psi_t = k(\psi_t^2)_t, 
\end{equation}
known as the Jordan--Moore--Gibson--Thompson equation (of Westervelt type, i.e., containing no gradient nonlinearities), where 
\begin{equation}\label{b}
b=\delta+\tau c^2\,.
\end{equation}
In nonlinear acoustics, excitation is commonly achieved by an array of piezoelectric transducers; see~\cite{kaltenbacher2007numerical}. We thus employ inhomogeneous Neumann conditions
\[
\frac{\partial \psi}{\partial n}=g
\]
on some surface $\Gamma$.
The wave equations will be considered in a bounded $C^{1,1}$ domain $\Omega$, with $\Gamma\subseteq\partial\Omega$, motivated from the point of view of applications by the need to restrict attention (as well as numerical computations) to a certain domain of interest even though wave propagation in reality occurs in free space. Working on a bounded domain is also crucial from an analysis point of view since it enables the use of certain embedding results that would not be valid on unbounded domains. \\
\indent This reasoning necessitates the use of appropriate boundary conditions to avoid spurious reflections of the outgoing waves on the boundary of the domain of interest $\Omega$, which we here do by imposing linear absorbing boundary conditions on the rest of the boundary
\[
\frac{\partial \psi}{\partial n}=-\beta \psi_t \quad \mbox{ on }\Sigma=\partial\Omega\setminus\Gamma\,,
\]
where $\beta>0$ is a fixed positive coefficient; see~\cite{Nikolic15} and the references therein. Moreover, we confine ourselves to the setting of homogeneous initial conditions, which is practically relevant in applications such as lithotripsy~\cite{yoshizawa2009high}. The results can be extended to nonhomogeneous initial conditions in a straightforward manner. \\
\indent Some of the first steps into well-posedness and long-time behavior of the Westervelt equation have been made in a joint paper  \cite{KL09Westervelt} by Irena Lasiecka and one of the authors of this paper. One of her key observations that enabled this analysis was the fact that \eqref{Westervelt0} can be formulated as a second-order strongly damped wave equation  
\begin{equation} \label{Westervelt}
(1-2k \psi_t)\psi_{tt}-c^2\Delta \psi - \delta\Delta \psi_t = 0 \quad \mbox{ in }\Omega\times(0,T) 
\end{equation}
with a nonlinear coefficient $(1-2k \psi_t)$ of the second time derivative. The positivity and non-degeneracy of this factor is crucial for the mathematical analysis as well as for the physical validity of the model.\\
\indent The strong damping term $-\delta\Delta \psi_t$ allows to estimate the $L^\infty(0,T;H^2(\Omega))$ norm of $\psi_t$ and therewith, by virtue of the embedding $H^2(\Omega)\to L^\infty(\Omega)$, to guarantee nondegeneracy of \eqref{Westervelt} for small initial and boundary data. As a downside, this term renders the equation parabolic -- its linearization gives rise to an analytic semigroup \cite{KL09Westervelt} and to maximal parabolic regularity \cite{MW11} -- and thus leads to the infinite speed of propagation.
An analogous reformulation can be done for the JMGT equation
\begin{equation} \label{WesterveltMC}
\tau\psi_{ttt}+(1-2k \psi_t)\psi_{tt}-c^2\Delta \psi - b\Delta \psi_t = 0 \quad \mbox{ in }\Omega\times(0,T), 
\end{equation}
where additional challenges arise due to the appearance of a third order in time derivative. As desired from a physical point of view, this term counteracts the strong damping and mathematically leads to a loss of analyticity of the semigroup as well as maximal parabolic regularity; see~\cite[Remark 1.3]{KLM12_MooreGibson},~\cite[Subsection 6.2.1]{MarchandMcDevittTriggiani12}, and~\cite{LeCroneSimonett}.
\subsubsection{A relaxed JMGT equation}\hspace*{0.15cm} As an alternative to enforcing non-degeneracy by means of higher order estimates, we also introduce a relaxation of the JMGT equation for which we will prove existence of a less regular solution:
\begin{equation} \label{WesterveltMC_relaxed}
\begin{aligned}
\tau\psi_{ttt}+h(\psi_t)\psi_{tt}-c^2\Delta \psi - b\Delta \psi_t = 0 \mbox{ in }\Omega\times(0,T),
\end{aligned}
\end{equation}
where the function $h \in C^0(\mathbb{R})$ is assumed to be bounded:
\begin{align} \label{conditions_h}
 \underline{\alpha} \leq h(s) \leq \overline{\alpha}, \quad \forall s \in \mathbb{R}.
\end{align}
Such an approach to modeling is often taken, e.g., in the analysis of predictive tumor models to control the triple product terms while having $H^1$ regular solutions; see~\cite{garcke2017well}. In practice, we might choose the function $h$ as
\begin{align*}
h(s)=1-\min \{-1,\, \max \{1,\, 2k s\} \}
\end{align*}
since $h(\psi_t)=1-2k \psi_t$ a.e. if $2k\|\psi_t\|_{L^\infty(\Omega \times (0,T))} < 1$.
\subsubsection{Linearized JMGT equation} To establish well-posedness of \eqref{WesterveltMC} and \eqref{WesterveltMC_relaxed}, we also study the following linearization of these equations:
\begin{equation} \label{WesterveltMC_lin}
\begin{aligned}
\tau\psi_{ttt}+\alpha(x,t)\psi_{tt}-c^2\Delta \psi - b\Delta \psi_t = f(x,t) \mbox{ in }\Omega\times(0,T),
\end{aligned}
\end{equation}
which is sometimes called the Stokes--Moore--Gibson--Thompson (SMGT) equation~\cite{bucci2018feedback}.
\medskip

We note that this paper is a follow-up to \cite{kaltenbacher2019jordan}, where we have studied the JMGT equation and its singular limit as $\tau\to0$ in the simpler setting of homogeneous Dirichlet boundary conditions. The purpose of the present paper is to treat the practically relevant situation of Neumann and absorbing boundary conditions, which indeed turns out to require different energy estimates, as well as additional considerations concerning higher spatial regularity of solutions $\psi_t(t)\in H^s(\Omega)$ for $s>\frac32$ with the possibly mixed boundary conditions. The latter is crucial for avoiding degeneracy, i.e., guaranteeing positivity of the coefficient $1-2k\psi_t$ in \eqref{WesterveltMC}, via the embedding $H^s(\Omega)\to L^\infty(\Omega)$.
As an alternative to the high spatial regularity enforced for this purpose in previous publications on \eqref{Westervelt}, \eqref{WesterveltMC}, and other models of nonlinear acoustics, we also consider the relaxed version \eqref{WesterveltMC_relaxed}, for which we establish well-posedness with weaker spatial regularity.\\

\indent The remainder of this paper is organized as follows. We first investigate the pure Neumann-case setting $\Gamma=\partial\Omega$. To this end, in Section \ref{sec:West_lin}, we analyze the linearized equation \eqref{WesterveltMC_lin} on three different levels of assumptions and regularity results. Firstly, assuming $\alpha\in L^\infty$ without any sign condition and $f\in L^2$, which gives well-posedness with $H^1$ regularity in space and a $\tau$-dependent energy bound. Secondly, assuming additionally $\alpha$ to be positive and bounded away from zero, which renders the energy bound $\tau$-independent. The third case includes additional stronger regularity assumptions on $f$ and $\alpha$, which yields $H^2$ regularity in space, as needed to guarantee non-degeneracy, with a $\tau$-independent energy bound.\\
\indent For the nonlinear models under consideration here, we correspondingly show well-posedness of the relaxed JMGT equation \eqref{WesterveltMC_relaxed} in a low regularity regime without sign condition on $h$ (Section \ref{sec:West_relaxed_fixedpoint}) and of the original JMGT equation \eqref{WesterveltMC} in a higher regularity setting with a strictly positive coefficient $(1-2k\psi_t)$ (Section \ref{sec:West_fixedpoint}). The latter goes with a $\tau$-independent bound, which allows us to pass to the limit as $\tau\to0$ in Section \ref{sec:Singular_limit} and recover the classical Westervelt equation \eqref{Westervelt} as a singular limit of JMGT.
The final Section \ref{sec:ABC} deals with the situation of absorbing boundary conditions, i.e., the case when $\mbox{meas}(\partial\Omega\setminus\Gamma)>0$.
\subsubsection{Notation}
The time interval and the spatial domain are often omitted for notational simplicity when writing norms; for example, $\|\cdot\|_{L^p L^q}$ denotes the norm on $L^p(0,T;L^q(\Omega))$. We denote the $L^2(\Omega)$ inner product by $(\cdot,\cdot)_{L^2}$ and the $L^2(\Omega)$ norm as well as the absolute value by $|\cdot|$.
\section{Analysis of the linearized JMGT equation} \label{sec:West_lin}
We next focus on the analysis of the linearized JMGT equation \eqref{WesterveltMC_lin}
complemented with inhomogeneous Neumann data and zero initial conditions.
\subsection{$H^1$ regularity with a $\tau$-dependent bound} We begin by proving existence of an $H^1$ regular solution of \eqref{WesterveltMC_lin}. Note that here we do not impose any restrictions on the sign of the coefficient $\alpha$. However, as a downside, the bounds we will derive on the solution will not be uniform with respect to $\tau$.
\begin{theorem} \label{th:wellposedness_lin_lower_tau}
	Let $c^2$, $b$, $\tau>0$, and let $T>0$. Assume that
	\begin{itemize}
		\item $\alpha \in  L^\infty(0,T; L^\infty(\Omega))$,
        \smallskip
		\item $f\in L^2(0,T; L^2(\Omega))$, \smallskip
		\item $g \in H^2(0,T; H^{-1/2}(\Gamma))$,\smallskip
		\item $(g, g_t)\vert_{t=0}=(0, 0)$ (compatibility with inital data).
	\end{itemize}
	Then there exists a unique weak solution $\psi$ of the problem 
	\begin{equation} \label{ibvp_linear}
		\begin{aligned}
		\begin{cases}
		\tau\psi_{ttt}+\alpha(x,t)\psi_{tt}-c^2\Delta \psi - b\Delta \psi_t = f(x,t) \quad \mbox{ in }\Omega\times(0,T), \\[2mm]
		\dfrac{\partial \psi}{\partial n}=g \quad \mbox{ on } \Gamma\times(0,T),\\[2mm]
		(\psi, \psi_t, \psi_{tt})=(0, 0, 0) \quad \mbox{ in }\Omega\times \{0\},
		\end{cases}
		\end{aligned}
		\end{equation}
	in the weak $(H^{1})^{\star}$ sense that satisfies 
	\begin{equation*} 
	\begin{aligned}
	\psi \in \, W^{1, \infty}(0;T;H^1(\Omega)) \cap W^{2, \infty}(0,T; L^2(\Omega))\cap H^3(0,T;H^1(\Omega)^*).
	\end{aligned}
	\end{equation*}
Furthermore, the solution fullfils the estimate
	\begin{equation}\label{energy_est_lin}
	\begin{aligned}
& \tau^2 \|\psi_{ttt}\|_{L^2 (H^1)^\star}^2+\tau \|\psi_{tt}\|^2_{L^\infty L^2}+\|\psi_t\|^2_{L^\infty H^1}\\[1mm]
\leq&\, C(\alpha, \tau, T)\left(\|g\|^2_{W^{1, \infty}H^{-1/2 }}+\|g_{t}\|^2_{H^1 H^{-1/2}}+\|f\|^2_{L^2 L^2}\right). 
	\end{aligned}
	\end{equation}
The constant above is given by
\begin{equation} \label{C_Theorem2.1}
\begin{aligned} 
 & C(\alpha, \tau, T)\\
 =&\, \begin{multlined}[t]C_1 \left(\tfrac{1}{\tau^2}\|\alpha\|^2_{L^\infty L^\infty}+T^2+1 \right)\\ \times \textup{exp}\left(C_2 (\tfrac{1}{\tau}+\tfrac{1}{\tau}\|\alpha\|_{L^\infty L^\infty}+1+T)T\right)
(1+\tau), \end{multlined}
\end{aligned}
\end{equation}
where $C_1$, $C_2>0$ do not depend on $\tau, T$, or $\alpha$. 
\end{theorem}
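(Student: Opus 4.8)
The plan is to construct the solution by a Faedo--Galerkin scheme, to derive an a priori energy estimate uniform in the discretization parameter by testing with the discrete second time derivative, and then to pass to the limit; uniqueness will follow from the same a priori bound applied to the difference of two solutions. I would build the Galerkin spaces from the eigenfunctions $\{w_j\}$ of the Neumann Laplacian, so that the inhomogeneous boundary datum $g$ enters only through the weak form and the basis itself need not satisfy any boundary condition. The weak formulation I work with reads: for all $v\in H^1(\Omega)$,
\[
\tau\langle\psi_{ttt},v\rangle + (\alpha\psi_{tt},v)_{L^2} + c^2(\nabla\psi,\nabla v)_{L^2} + b(\nabla\psi_t,\nabla v)_{L^2} = (f,v)_{L^2} + c^2\langle g,v\rangle_\Gamma + b\langle g_t,v\rangle_\Gamma,
\]
the boundary terms arising from integrating the Laplacians by parts and inserting $\partial_n\psi=g$ and $\partial_n\psi_t=g_t$. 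At the discrete level this is a linear third-order (in $t$) ODE system with $L^\infty$ coefficients and zero initial data, solvable by Carath\'eodory theory, so approximate solutions $\psi^n$ exist on $[0,T]$ with $\psi_{ttt}^n\in L^2$ in time.

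The heart of the argument is the energy estimate. I would test the discrete equation with $\psi_{tt}^n$ and integrate over $(0,t)$. The term $\tau(\psi_{ttt}^n,\psi_{tt}^n)$ produces $\tfrac{\tau}{2}\|\psi_{tt}^n(t)\|^2$, the damping term $b(\nabla\psi_t^n,\nabla\psi_{tt}^n)$ produces $\tfrac{b}{2}\|\nabla\psi_t^n(t)\|^2$, and the stiffness term requires one integration by parts in time,
\[
\int_0^t c^2(\nabla\psi^n,\nabla\psi_{tt}^n)\,ds = c^2(\nabla\psi^n(t),\nabla\psi_t^n(t)) - c^2\int_0^t\|\nabla\psi_t^n\|^2\,ds,
\]
using the zero initial conditions. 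The indefinite endpoint term $c^2(\nabla\psi^n(t),\nabla\psi_t^n(t))$ is split by Young's inequality, the part $\tfrac{b}{4}\|\nabla\psi_t^n(t)\|^2$ absorbed on the left and the remainder controlled via $\|\nabla\psi^n(t)\|^2 \leq t\int_0^t\|\nabla\psi_t^n\|^2\,ds$ (again from the zero data). Because no sign is assumed on $\alpha$, the term $\int_0^t(\alpha\psi_{tt}^n,\psi_{tt}^n)\,ds$ is merely bounded by $\|\alpha\|_{L^\infty L^\infty}\int_0^t\|\psi_{tt}^n\|^2\,ds$, which is precisely why the resulting Gronwall rate — and hence the constant in \eqref{C_Theorem2.1} — scales like $\tfrac{1}{\tau}(1+\|\alpha\|_{L^\infty L^\infty})$.

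The boundary forcing is the step that genuinely distinguishes the Neumann setting from the Dirichlet case of \cite{kaltenbacher2019jordan}, and I expect it to be the main obstacle. Since $\psi_{tt}^n$ lies only in $L^2(\Omega)$, its trace is not controlled, so the integrals $\int_0^t\langle g,\psi_{tt}^n\rangle_\Gamma$ and $\int_0^t\langle g_t,\psi_{tt}^n\rangle_\Gamma$ cannot be estimated directly. Instead I integrate them by parts in time, e.g.
\[
\int_0^t\langle g,\psi_{tt}^n\rangle_\Gamma\,ds = \langle g(t),\psi_t^n(t)\rangle_\Gamma - \int_0^t\langle g_t,\psi_t^n\rangle_\Gamma\,ds,
\]
where the endpoint contribution at $s=0$ vanishes thanks to the compatibility conditions $(g,g_t)|_{t=0}=(0,0)$ and $\psi_t^n(0)=0$. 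Now $\psi_t^n\in H^1(\Omega)$ has a trace in $H^{1/2}(\Gamma)$, so the pairings are estimated by $\|g\|_{H^{-1/2}}\|\psi_t^n\|_{H^1}$ and their time-integrated analogues; this is exactly what forces the demands $g\in W^{1,\infty}H^{-1/2}$ and $g_t\in H^1 H^{-1/2}$ on the right-hand side of \eqref{energy_est_lin}, the latter supplying $g_{tt}\in L^2 H^{-1/2}$. To close against the full $H^1$ norm of $\psi_t^n$ — and not merely $\|\nabla\psi_t^n\|$, since the Neumann Laplacian has the constants in its kernel — I would use $\|\psi_t^n(t)\|_{L^2}^2\le t\int_0^t\|\psi_{tt}^n\|^2\,ds$, which again feeds into the Gronwall term.

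Collecting these estimates yields $y(t)\le A + B\int_0^t y(s)\,ds$ with $y(t)=\tfrac{\tau}{2}\|\psi_{tt}^n(t)\|^2+\tfrac{b}{4}\|\nabla\psi_t^n(t)\|^2$, where $A$ is a multiple of $\|g\|^2_{W^{1,\infty}H^{-1/2}}+\|g_t\|^2_{H^1 H^{-1/2}}+\|f\|^2_{L^2 L^2}$ and $B=O\big(\tfrac1\tau(1+\|\alpha\|_{L^\infty L^\infty}+T)\big)$; Gronwall's inequality then delivers the bound \eqref{energy_est_lin}--\eqref{C_Theorem2.1}, uniformly in $n$. The $\tau^2\|\psi_{ttt}\|^2_{L^2(H^1)^\star}$ contribution is recovered a posteriori by reading $\tau\psi_{ttt}^n=f-\alpha\psi_{tt}^n+c^2\Delta\psi^n+b\Delta\psi_t^n$ in $(H^1(\Omega))^\star$ and bounding each term in the norms just controlled, which is where the prefactor $\tfrac{1}{\tau^2}\|\alpha\|^2_{L^\infty L^\infty}$ originates. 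Standard weak-$\star$ compactness then extracts a limit $\psi$ solving \eqref{ibvp_linear} in the stated class, the estimate surviving by weak lower semicontinuity of the norms; finally, since the problem is linear, applying the same a priori estimate to the difference of two solutions (with $f=0$, $g=0$, zero initial data) forces it to vanish, giving uniqueness.
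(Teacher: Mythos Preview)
Your proposal is correct and follows essentially the same route as the paper: Galerkin approximation via Neumann eigenfunctions, testing with $\psi_{tt}^n$, integration by parts in time both for the $c^2$-stiffness term and for the boundary contributions $\langle c^2 g + b g_t,\psi_{tt}^n\rangle_\Gamma$ (so that only the trace of $\psi_t^n\in H^1$ is needed), then Gronwall, a separate duality bound on $\tau\psi_{ttt}^n$, and weak-$\star$ compactness. The only cosmetic difference is that the paper obtains the $L^2$ part of $\|\psi_t^n(t)\|_{H^1}$ by first adding $(\psi_t^n,\phi)$ to both sides of the discrete equation (so that $\tfrac12|\psi_t^n(t)|_{L^2}^2$ appears directly in the energy), whereas you recover it via $\|\psi_t^n(t)\|_{L^2}^2\le t\int_0^t\|\psi_{tt}^n\|^2\,ds$; both devices feed the same Gronwall structure and lead to the same form of the constant.
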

\begin{proof}
We conduct the proof by employing Galerkin approximations in space and compactness arguments; cf.~\cite{EvansBook, Roubicek}.\\[3mm]
\noindent \textbf{Existence of a solution.} Let $\{w_i\}_{i \in \mathbb{N}}$ denote the eigenfunctions of the Neumann-Laplacian operator $- \Delta$:
\begin{equation} \label{eigenf_Laplacian}
\begin{aligned}
-\Delta w=&\, \lambda w \quad \mbox{ in }\Omega, \\
\frac{\partial w}{\partial n}=& \, 0 \quad  \mbox{ on }\Gamma.
\end{aligned}
\end{equation}
Then $\{w_i\}_{i \in \mathbb{N}}$ can be normalized to form an orthogonal basis of $H^1(\Omega)$ and to be orthonormal with respect to the $L^2(\Omega)$ scalar product.  \\
\indent We fix $n \in \mathbb{N}$ and introduce $V_n=\text{span}\{w_1, \ldots, w_n\}$. Our approximate solution is given by
\begin{equation}
\begin{aligned}
\psi^n(x,t)=& \, \displaystyle \sum_{i=1}^n \xi_i(t)w_i(x),\\
\end{aligned}
\end{equation}
where $\xi_i:(0,T) \rightarrow \mathbb{R}$, $i \in \{1,\ldots,n\}$. We then consider the following approximation of the original problem
\begin{equation} \label{ibvp_semi-discrete}
\begin{aligned} 
\begin{cases}
\hspace{3mm}(\tau \psi^n_{ttt}+\alpha \psi^n_{tt}, \phi)_{L^2}+(c^2 \nabla \psi^n+b \nabla \psi_t^n, \nabla \phi)_{L^2} \\[1mm]
= (f, \phi)_{L^2}+(c^2g+bg_t, \phi)_{L^2(\Gamma)}, \\[1mm]
\text{for every $\phi \in V_n$ pointwise a.e. in $(0,T)$}, \\[1mm]
(\psi^n(0), \psi_t^n(0), \psi^n_{tt}(0))=(0, 0, 0).
\end{cases}
\end{aligned}
\end{equation}
Let $I^n=[I_{ij}]$, $M^n=[M_{ij}]$, $K^n=[K_{ij}]$, $C^n=[C_{ij}]$, and $F^n=[F_{i}]$, where
\begin{equation}
\begin{aligned}
& I^n_{ij}=(w_i, w_j)_{L^2}=  \delta_{ij}, \ M^n_{ij}(t)=(\alpha w_i, w_j)_{L^2}, \\
& K^n_{ij}=(\nabla w_i, \nabla w_j)_{L^2},\\
& F^n_i=(f, w_i)_{L^2}+(c^2 g +bg_t, w_i)_{L^2(\Gamma)},
\end{aligned}
\end{equation}
and $\delta_{ij}$ denotes the Kronecker delta. By introducing $\xi^n=[\xi_1 \ldots \xi_n]^T$,  problem \eqref{ibvp_semi-discrete} can be rewritten as a system of ordinary differential equations:
\begin{equation} \label{ODE_system}
\begin{aligned}
\begin{cases}
\tau I^n \xi^n_{ttt}+M^n \xi^n_{tt}+b K^n \xi_t^n+c^2 K^n \xi^n=F^n(t), \\
(\xi^n(0), \xi^n_t(0), \xi^n_{tt}(0))=(0, 0, 0).
\end{cases}
\end{aligned}
\end{equation}
Existence of a solution $\xi^n \in H^3(0, T_n)$ of \eqref{ODE_system} can be then obtained from standard theory of ODEs; cf.~\cite[Chapter 1]{Roubicek}. Therefore, problem \eqref{ibvp_semi-discrete} has a solution $\psi^n \in H^3(0,T_n; V_n)$. \\[2mm]
\noindent \textbf{Energy estimate.} We next want to derive a bound for $\psi^n$ that is uniform with respect to $n$. To this end, we add the term $(\psi^n_t, \phi)$ to both sides of \eqref{ibvp_semi-discrete}, test the problem with $\phi=\psi^n_{tt}$, and integrate over $(0, t)$ to obtain
\begin{equation} \label{West_first_est_discrete}
\begin{aligned}
&\tfrac{\tau}{2} |\psi^n_{tt}(t)|^2_{L^2}+\tfrac{b}{2}|\nabla \psi^n_t(t)|^2_{L^2}+|\psi^n_{t}(t)|^2_{L^2}\\
\leq&\, c^2 \left|\int_0^t \int_{\Omega} \nabla \psi^n \cdot \nabla \psi^n_{tt} \, \textup{d}x \textup{d}s \right|+\|\alpha\|_{L^\infty L^\infty}\|\psi_{tt}\|^2_{L^2_tL^2}\\
&+\|\psi^n_t\|_{L^2_tL^2}\|\psi^n_{tt}\|_{L^2_tL^2}+ \|f\|_{L^2L^2} \|\psi^n_{tt}\|_{L^2_tL^2}\\
&+\int_{\Gamma}(c^2 g+b g_t)\psi^n_{tt} \, \textup{d}x \textup{d}s,
\end{aligned}
\end{equation}
since $\psi_{t}^n(0)=\psi_{tt}^n(0)=0$. To simplify the notation, we have omitted the argument $(s)$ under the time integral
and employed the abbreviation $L_t^2L^2$ for $L^2(0,t;L^2(\Omega))$. We can further estimate the terms on the right-hand side in \eqref{West_first_est_discrete} as follows
\begin{equation*}
\begin{aligned}
c^2 \left |\int_0^t \int_{\Omega} \nabla \psi^n \cdot \nabla \psi^n_{tt} \, \textup{d}x 
 \textup{d}s \right|=& \,  c^2 \left| \int_{\Omega} \nabla \psi^n(t) \cdot \nabla \psi^n_{t}(t) \, \textup{d}x - \|\nabla \psi^n_t\|^2_{L^2L^2} \right| \\
\leq& \, c^2 |\nabla \psi^n(t)|_{L^2}|\nabla \psi^n_{t}(t)|_{L^2}+c^2\|\nabla \psi^n_t\|^2_{L^2L^2}.
\end{aligned}
\end{equation*}
We estimate the boundary integral by first integrating by parts with respect to time and then employing H\"older's inequality and the trace theorem:
\begin{equation*}
\begin{aligned}
&\int_0^t\int_{\Gamma}(c^2 g+b g_t)\psi^n_{tt} \, \textup{d}x \textup{d}s \\[1mm]
=& \,\int_{\Gamma}(c^2 g(t)+b g_t(t))\psi^n_{t}(t) \, \textup{d}x-\int_0^t\int_{\Gamma}(c^2 g_{t}+b g_{tt})\psi^n_{t} \, \textup{d}x \textup{d}s \\[1mm]
\leq& \, |c^2 g(t)+b g_t(t)|_{H^{-1/2}}C_{tr}|\psi^n_t(t)|_{H^1}+\|c^2 g_{t}+b g_{tt}\|_{L^2 H^{-1/2}}C_{tr}\|\psi_t^n\|_{L^2_t H^1}.
\end{aligned}
\end{equation*}
We note that the regularity assumption on $g_{tt}$ is introduced since we do not want to involve the $H^1$ norm of $\psi^n_{tt}$ in the estimates. After employing these bounds in \eqref{West_first_est_discrete} as well as Young's $\varepsilon$-inequality with $\varepsilon \in \{b/8, 1/2, 1/4\}$, we arrive at
\begin{equation} \label{West_2nd_estimate_discrete}
\begin{aligned}
&\tfrac{\tau}{2} |\psi^n_{tt}(t)|^2_{L^2}+\tfrac{b}{2}|\nabla \psi^n_t(t)|^2_{L^2}+|\psi^n_{t}(t)|^2_{L^2}\\[1mm]
\leq&\, \tfrac {2c^4}{b} |\nabla \psi^n(t)|_{L^2}+\tfrac{b}{8}|\nabla \psi^n_{t}(t)|^2_{L^2}+c^2\|\nabla \psi^n_t\|^2_{L^2_tL^2}+\tfrac12 \|\psi^n_t\|^2_{L^2_tL^2}\\
&+(1+\|\alpha\|_{L^\infty L^\infty})\|\psi^n_{tt}\|^2_{L^2_tL^2}+\tfrac12 \|f\|^2_{L^2L^2}\\
&+C_{tr}^2|c^2 g(t)+b g_t(t)|^2_{H^{-1/2}}+\tfrac{1}{4}|\psi^n_t(t)|^2_{L^2}\\
&+C_{tr}^2 \,\tfrac{2}{b}|c^2 g(t)+b g_t(t)|^2_{H^{-1/2}}+\tfrac{b}{8}|\nabla \psi^n_t(t)|^2_{L^2}\\
&+\tfrac12 (C_{tr})^2\|c^2 g_{t}+b g_{tt}\|^2_{L^2 H^{-1/2}}+\tfrac12\|\psi_t^n\|^2_{L^2_t H^1}.
\end{aligned}
\end{equation}
Since $\psi^n(0)=0$, we can further estimate the first term on the right-hand side as
$$|\nabla \psi^n(t)|_{L^2} \leq \sqrt{T}\|\nabla \psi^n_t\|_{L^2 L^2}, $$
a.e. in time. Then an application of Gronwall's inequality to \eqref{West_2nd_estimate_discrete} and taking a supremum over $(0,T_n)$ leads to
\begin{equation} \label{discrete_est_0}
\begin{aligned}
&\tau \|\psi^n_{tt}\|^2_{L^\infty L^2}+\|\nabla \psi^n_t\|^2_{L^\infty L^2}+\|\psi^n_{t}\|^2_{L^\infty L^2}\\[1mm]
\leq&\, C(\alpha, \tau, T)(\|g\|^2_{W^{1, \infty}H^{-1/2 }}+\|g_{t}\|^2_{H^1 H^{-1/2}}+\|f\|^2_{L^2 L^2}),
\end{aligned}
\end{equation}
where the constant is given by
\begin{align} \label{const_}
\overline{C}(\alpha, \tau, T)=\overline{C}_1 \, \textup{exp}(\overline{C}_2 (\tfrac{1}{\tau}+\tfrac{1}{\tau}\|\alpha\|_{L^\infty L^\infty}+1+T)T)
(1+\tau),
\end{align}
and $\overline{C}_1$, $\overline{C}_2>0$ do not depend on $\tau$ or $n$. Since the right-hand side of \eqref{discrete_est_0} does not depend on $T_n$, we are allowed to extend the existence interval to $(0,T)$. \\
\indent Note that the (weak) $\tau$ dependence of the constant \eqref{const_} via the factor $1+\tau$ results from the $\tau$ dependence of $b$ according to \eqref{b}, while the left hand side of the equation is not affected by this due to the fact that $b\geq\delta$ holds for all $\tau\geq0$. \\
\indent Morover, we can obtain a bound on the third time derivative of $\psi^n$ by noting that
\begin{equation}
\begin{aligned}
&\left|\int_0^t \int_{\Omega} \tau \psi^n_{ttt} \xi \, \textup{d}x \textup{d}s\right| \\
\leq& \, \begin{multlined}[t]\left(\|\alpha\|_{L^\infty L^\infty}\|\psi^n_{tt}\|_{L^2L^2}+c^2\|\nabla \psi^n\|_{L^2L^2}+b \|\nabla \psi_t^n\|_{L^2L^2} \right.\\
+\left.\|f\|_{L^2L^2}+\|c^2g+bg_t\|_{L^2 H^{-1/2}} \right)\|\xi\|_{L^2 H^1},\end{multlined}
\end{aligned}
\end{equation}
for all $\xi \in L^2(0,T; H^1(\Omega))$. By also taking into account \eqref{discrete_est_0}, it follows that
\begin{equation} \label{est_psi_ttt}
\begin{aligned}
&\tau \|\psi^n_{ttt}\|_{L^2 (H^1)^\star}\\
\leq&\, \overline{\overline{C}}(\alpha, \tau, T)(\|g\|_{W^{1, \infty}H^{-1/2 }}+\|g_{t}\|_{H^1 H^{-1/2}}+\|f\|_{L^2 L^2}),
\end{aligned}
\end{equation}
where the constant is given by 
\begin{equation*}
\begin{aligned}
\overline{\overline{C}}(\alpha, \tau, T)=& \,
\overline{\overline{C}}_1 \left(\tfrac{1}{\tau}\|\alpha\|_{L^\infty L^\infty}+T+1 \right) \\
&\times\textup{exp}(\overline{\overline{C}}_2 (\tfrac{1}{\tau}+\tfrac{1}{\tau}\|\alpha\|_{L^\infty L^\infty}+1+T)T)
(1+\tau).
\end{aligned}
\end{equation*}
We can then combine estimates \eqref{est_psi_ttt} and \eqref{discrete_est_0} to get
\begin{equation} \label{discrete_est_1}
\begin{aligned}
&\tau^2 \|\psi^n_{ttt}\|^2_{L^2 (H^1)^\star}+\tau \|\psi^n_{tt}\|^2_{L^\infty L^2}+\|\nabla \psi^n_t\|^2_{L^\infty L^2}+\|\psi^n_{t}\|^2_{L^\infty L^2}\\[1mm]
\leq&\, C(\alpha, \tau, T)(\|g\|^2_{W^{1, \infty}H^{-1/2 }}+\|g_{t}\|^2_{H^1 H^{-1/2}}+\|f\|^2_{L^2 L^2}),
\end{aligned}
\end{equation}
where the constant is given by \eqref{C_Theorem2.1} and $C_1$, $C_2>0$ do not depend on $\tau$ or $n$.\\
\indent  Since the right-hand side of \eqref{discrete_est_1} is independent of $n$, we can find a subsequence, denoted again by $\{\psi^n\}_{n \in \mathbb{N}}$, and a function $\psi$ such that
\begin{equation*} 
\begin{alignedat}{4} 
\psi_{ttt}^n  &\relbar\joinrel\rightharpoonup \psi_{ttt} &&\text{ weakly}  &&\text{ in } &&L^2(0,T; (H^1(\Omega))^\star),  \\
\psi_{tt}^n  &\relbar\joinrel\rightharpoonup \psi_{tt} &&\text{ weakly-$\star$}  &&\text{ in } &&L^\infty(0,T;L^2(\Omega)),  \\
\psi_t^n &\relbar\joinrel\rightharpoonup \psi_t &&\text{ weakly-$\star$} &&\text{ in } &&L^\infty(0,T; H^1(\Omega)).
\end{alignedat} 
\end{equation*}
It is then straightforward to show that $\psi$ solves \eqref{ibvp_linear} and fulfills the estimate \eqref{energy_est_lin}; cf.~\cite{kaltenbacher2019jordan}. 
\end{proof}
\subsection{$H^1$ regularity with a $\tau$-independent bound} We next prove a modification of the previous result on $H^1$ regularity with a bound on the solution that is uniform with respect to $\tau$ in a bounded interval $(0,\overline{\tau}]$.
\begin{theorem} \label{th:wellposedness_lin_lower} Let the assumption of Theorem~\ref{th:wellposedness_lin_lower} hold and assume additionally that 
for some fixed $\overline{\tau}>0$, $\tau\in(0,\overline{\tau}]$, as well as that
\begin{align} \label{non-degeneracy_condition}
\exists \ul{\alpha}>0: \,\alpha(t)\geq\ul{\alpha}\ \text{  a.e. in } \Omega \times (0,T).  
\end{align}
Then the solution of \eqref{ibvp_linear_hom} satisfies the estimate 
	\begin{equation}\label{energy_est_lin_tau_independent}
	\begin{aligned}
&\tau^2 \|\psi_{ttt}\|_{L^2 (H^1)^*}^2+\tau \|\psi_{tt}\|^2_{L^\infty L^2}+\|\psi_{tt}\|^2_{L^2L^2}+\|\psi_t\|^2_{L^\infty H^1}\\[1mm]
\leq&\, C(T) \left(\|g\|^2_{W^{1, \infty}H^{-1/2 }}+\|g_{t}\|^2_{H^1 H^{-1/2}}+\|f\|^2_{L^2 L^2}\right), 
	\end{aligned}
	\end{equation}
where the constant is given by
\begin{equation*}
\begin{aligned}
C(T)=C_3 \,(1+T^2) \textup{exp}(C_4 (1+T)T)
(1+\overline{\tau}),
\end{aligned}
\end{equation*}
and $C_3$, $C_4>0$ do not depend on $\tau, T$, or $\alpha$.
\end{theorem}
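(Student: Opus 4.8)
The existence and uniqueness of the solution are already guaranteed by Theorem~\ref{th:wellposedness_lin_lower_tau}, whose hypotheses are all in force here, so the only task is to upgrade the a priori bound \eqref{discrete_est_1} to the $\tau$-uniform estimate \eqref{energy_est_lin_tau_independent}. I would work again on the Galerkin level, testing the approximate problem \eqref{ibvp_semi-discrete} (after adding $(\psi_t^n,\phi)$ to both sides, exactly as in the proof of Theorem~\ref{th:wellposedness_lin_lower_tau}) with $\phi=\psi_{tt}^n$ and integrating over $(0,t)$, and then pass to the limit by weak(-$\star$) lower semicontinuity along the subsequence already extracted in Theorem~\ref{th:wellposedness_lin_lower_tau}.

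The single decisive change is in the treatment of the term $(\alpha\psi_{tt}^n,\psi_{tt}^n)$. In Theorem~\ref{th:wellposedness_lin_lower_tau} this was moved to the right-hand side and bounded by $\|\alpha\|_{L^\infty L^\infty}\|\psi_{tt}^n\|_{L^2_tL^2}^2$, which forced the factors $\tfrac1\tau$ and $\tfrac1\tau\|\alpha\|_{L^\infty L^\infty}$ into the Gronwall exponent of \eqref{const_}. Under the non-degeneracy assumption \eqref{non-degeneracy_condition} I would instead keep this term on the coercive side, using
\[
\int_0^t(\alpha\psi_{tt}^n,\psi_{tt}^n)\,\textup{d}s\ \geq\ \underline{\alpha}\,\|\psi_{tt}^n\|_{L^2_tL^2}^2 .
\]
This simultaneously produces the new $\|\psi_{tt}\|_{L^2L^2}^2$ contribution on the left of \eqref{energy_est_lin_tau_independent} and removes the $\|\psi_{tt}^n\|_{L^2_tL^2}^2$ term from the Gronwall integrand. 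All remaining manipulations stay as before: the elliptic term $c^2(\nabla\psi^n,\nabla\psi_{tt}^n)$ and the boundary term are handled by integration by parts in time, producing no further factors of $\psi_{tt}^n$, and I would use \eqref{b} in the form $b\geq\delta$ so that the rate $2c^2/b\le 2c^2/\delta$ attached to $\tfrac b2|\nabla\psi_t^n(t)|^2$ is uniform in $\tau$, together with $b\le\delta+\overline{\tau}c^2$ so that the boundary data enter only through the harmless factor $(1+\overline{\tau})$.

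After the Young splittings, the only quantities left in the Gronwall integrand are $\|\nabla\psi_t^n\|_{L^2_tL^2}^2$ and $\|\psi_t^n\|_{L^2_tL^2}^2$, whose coercive partners $\tfrac b2|\nabla\psi_t^n(t)|^2$ and $\tfrac12|\psi_t^n(t)|^2$ carry $\tau$- and $\alpha$-independent weights; Gronwall's inequality then yields an exponent of order $(1+T)T$ and a prefactor of order $1+T^2$ (the latter from $|\nabla\psi^n(t)|\le\sqrt T\|\nabla\psi_t^n\|_{L^2L^2}$), both free of $\tau$ and $\alpha$. Taking the supremum over $t\in(0,T)$ gives the $\psi_{tt}$, $\nabla\psi_t$, and $\psi_t$ parts of \eqref{energy_est_lin_tau_independent}, and the $\tau^2\|\psi_{ttt}\|_{L^2(H^1)^\star}^2$ part follows verbatim from the duality argument \eqref{est_psi_ttt}, which only invokes the bounds just obtained. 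Passing to the limit is as in Theorem~\ref{th:wellposedness_lin_lower_tau}, with the convexity and positivity of $v\mapsto\int_0^T\!\int_\Omega\alpha v^2$ ensuring that $\underline{\alpha}\|\psi_{tt}\|_{L^2L^2}^2\le\liminf_n\int_0^T(\alpha\psi_{tt}^n,\psi_{tt}^n)$ survives in the limit.

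The step I expect to be most delicate is the absorption of the inhomogeneous Neumann contribution without spoiling $\tau$-uniformity. Integration by parts in time turns it into a term controlled by $C_{tr}\,|c^2g(t)+bg_t(t)|_{H^{-1/2}}\,|\psi_t^n(t)|_{H^1}$, so its absorption requires the \emph{full} $H^1$-norm of $\psi_t^n(t)$ at the endpoint, i.e.\ both $|\nabla\psi_t^n(t)|^2$ and $|\psi_t^n(t)|^2_{L^2}$, each with a weight that does not degenerate as $\tau\to0$. The gradient part is supplied by $\tfrac b2|\nabla\psi_t^n(t)|^2$ (uniform since $b\ge\delta$); the $L^2$ part must come from the added term $(\psi_t^n,\phi)$, which contributes the clean coercive $\tfrac12|\psi_t^n(t)|^2$ on the left while producing on the right only the cross term $\|\psi_t^n\|_{L^2_tL^2}\|\psi_{tt}^n\|_{L^2_tL^2}$ — now rendered harmless, since its $\psi_{tt}^n$-factor can be absorbed into the newly retained dissipation $\underline{\alpha}\|\psi_{tt}^n\|_{L^2_tL^2}^2$. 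Arranging the finitely many Young $\varepsilon$-choices so that all absorptions (boundary trace into $\tfrac b2|\nabla\psi_t^n(t)|^2+\tfrac12|\psi_t^n(t)|^2$, and every $\psi_{tt}^n$-cross term into the dissipation) take place simultaneously, with all leftover constants remaining bounded as $\tau\to0$, is the main bookkeeping obstacle.
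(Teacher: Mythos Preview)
Your proposal is correct and follows essentially the same approach as the paper: the single substantive change is to retain $\int_0^t(\alpha\psi_{tt}^n,\psi_{tt}^n)\,\textup{d}s\geq\underline{\alpha}\|\psi_{tt}^n\|_{L^2_tL^2}^2$ on the coercive side rather than moving it to the right, exactly as the paper does in passing from \eqref{West_first_est_discrete} to \eqref{West_first_est_discrete_alphapos}. Your discussion of the boundary-term absorption and the role of the auxiliary $(\psi_t^n,\phi)$ term is more detailed than the paper's ``standard manipulations'', but the underlying mechanism is the same.
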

\begin{proof}
\indent The proof follows analogously to the proof of Theorem~\ref{th:wellposedness_lin_lower_tau}. However, in case the condition \eqref{non-degeneracy_condition} holds, estimate \eqref{West_first_est_discrete} can be replaced by 
\begin{equation} \label{West_first_est_discrete_alphapos}
\begin{aligned}
&\tau |\psi^n_{tt}(t)|^2_{L^2}+\underline{\alpha}\|\psi_{tt}\|^2_{L^2L^2}+\tfrac{b}{2}|\nabla \psi^n_t(t)|^2_{L^2}+|\psi^n_{t}(t)|^2_{L^2}\\
\leq&\, c^2 \left|\int_0^t \int_{\Omega} \nabla \psi^n \cdot \nabla \psi^n_{tt} \, \textup{d}x \textup{d}s \right|\\
&+\|\psi^n_t\|_{L^2L^2}\|\psi^n_{tt}\|_{L^2L^2}+ \|f\|_{L^2L^2} \|\psi^n_{tt}\|_{L^2L^2}\\
&+\int_{\Gamma}(c^2 g+b g_t)\psi^n_{tt} \, \textup{d}x \textup{d}s,
\end{aligned}
\end{equation}
from which we can derive \eqref{energy_est_lin_tau_independent} after some standard manipulations, first in a discrete setting before passing to the limit.
\end{proof}
\subsection{$H^2$ regularity with $\tau-$independent bound}
To be able to later show well-posedness for the JMGT equation \eqref{WesterveltMC}, we need $H^2$ regularity of the solution to the linearized equation \eqref{WesterveltMC_lin}. We therefore also prove a higher-order regularity result with an energy estimate that has a $\tau$-indepen\-dent right-hand side.  To this end, we first define an appropriate extension of the Neumann boundary data to the interior.\\

\noindent \textbf{Extension of the inhomogeneous boundary data.} Following~\cite{bucci2018feedback, kaltenbacher2011well}, for $h \in H^s(\Gamma)$, we introduce the harmonic extension operator $N: h \mapsto v$, where $v$ solves
\begin{equation} \label{defN}
\begin{aligned}
\begin{cases}
-\Delta v+v&=0 \quad \text{ in } \Omega, \\
\hspace*{1cm}\dfrac{\partial v}{\partial n}&=h \quad \text{ on } \Gamma=\partial \Omega,
\end{cases}
\end{aligned}
\end{equation}
which for negative $s$ we interpret in the variational sense 
\[
\begin{aligned} 
\langle \nabla v , \nabla \phi \rangle +\langle v , \phi \rangle = \langle h , \phi \rangle \quad
\text{for every $\phi \in H^1(\Omega)$}.
\end{aligned}
\]
It is known that the operator $N$ is a linear bounded mapping
\begin{align} \label{operator_N}
N: H^s(\partial \Omega) \rightarrow H^{s+3/2}(\Omega),
\end{align}
for $s \in \mathbb{R}$; see~\cite{kaltenbacher2011well, lasiecka1991regularity}. In the upcoming proof, we will employ the particular cases $s=-1/2$ and $s=1/2$ 
and denote the norm of $N$ in both cases by $C_N$. 
Furthermore, since we extend time-dependent Neumann data $g$ to the interior, we apply the mapping $N$ pointwise a.e. in time 
and denote the resulting operator by $N$ again, i.e., $(Ng)(t):=N g(t)$.
We note that due to the linearity of $N$, it holds that $ \partial_t (Ng)(t)=(Ng_t)(t)$.\\
\indent  We study the following initial-boundary value problem for $\bar{\psi}=\psi-Ng$ with homogeneous boundary data:
\begin{equation} \label{ibvp_linear_hom}
	\begin{aligned}
	\begin{cases}
 \tau\bar{\psi}_{ttt}+\alpha\bar{\psi}_{tt}-c^2\Delta \bar{\psi} - b\Delta \bar{\psi}_t\\[2mm]
 = \, f-\tau Ng_{ttt}-\alpha Ng_{tt}+c^2 \Delta Ng+b\Delta Ng_t \ \mbox{ in }\Omega\times(0,T), \\[2mm]
	\dfrac{\partial \bar{\psi}}{\partial n}=0 \quad \mbox{ on } \Gamma\times(0,T),\\[2mm]
	(\bar{\psi}, \bar{\psi}_t, \bar{\psi}_{tt})=(0, 0, 0) \quad \mbox{ in }\Omega\times \{0\},
	\end{cases}
	\end{aligned}
	\end{equation}
 provided that the compatibility conditions between the function $g$ and initial data stated below hold.
\begin{theorem} \label{th:wellposedness_lin_higher}
	Let $c^2$, $b>0$, $\tau \in (0, \overline{\tau})$, for some $\overline{\tau}>0$, and let $T>0$. Assume that
	\begin{itemize}
		\item $\alpha \in X_\alpha^W:=L^\infty(0,T; W^{1,3}(\Omega)\cap L^\infty(\Omega))$ ,\smallskip
		\item  $\ \exists \, \ul{\alpha}>0: \,\alpha(t)\geq\ul{\alpha}\ \text{  a.e. in } \Omega \times (0,T), $ \smallskip
		\item $f\in H^1(0,T; L^2(\Omega))$, \smallskip
		\item $		\|\nabla \alpha\|_{L^\infty L^3}<\underline{\alpha}/\left (6  C_{H^1,L^6} \right)$, \smallskip
  	\item $g \in H^3(0,T; H^{-1/2}(\Gamma))\cap W^{1,\infty}(0,T;H^{1/2}(\Gamma))$, \\[1mm] 
    $(g, g_t , g_{tt})\vert_{t=0}=(0, 0, 0)$.
	\end{itemize}
	Then there exists a unique weak solution $\psi$ of the problem  \eqref{ibvp_linear} such that
		\begin{equation*} 
	\begin{aligned}
	\psi \in \, X^W := \begin{multlined}[t] \, W^{1, \infty}(0,T; H^2(\Omega)) \cap W^{2, \infty}(0;T;H^1(\Omega)) \\ \cap H^3(0,T;L^2(\Omega)). \end{multlined}
	\end{aligned}
	\end{equation*}
	Moreover, the solution fullfils the estimate
\begin{equation} \label{energy_est_lin_higher}
\begin{aligned}
& \begin{multlined}[c]\tau^2 \|\psi_{ttt}\|_{L^2L^2}+\tau
\|\psi_{tt}\|^2_{L^\infty H^1}\\
+\|\psi_{tt}\|^2_{L^2 H^1}+\|- \Delta \psi_t\|^2_{L^\infty L^2} \end{multlined}\\[1mm]
\leq&\,C(\alpha, T)  \begin{multlined}[t]\left(\tau^2 \|g_{ttt}\|^2_{L^2 H^{-1/2}}+ \tau \|g_{tt}\|^2_{L^\infty H^{-1/2}}+ \|g\|^2_{H^2 H^{-1/2}}\right. \\ \left. +\|g_t\|^2_{L^{\infty} H^{1/2}}+ \| f\|^2_{H^1 L^2} \right).  \end{multlined}
\end{aligned}
\end{equation}
The constant above is given by
\begin{equation*}
\begin{aligned}
C(\alpha, T)=& \, \begin{multlined}[t] C_5 \,(\|\alpha\|^2_{L^\infty L^\infty}+\|\nabla \alpha\|^2_{L^\infty L^3}+1)\\
\times \textup{exp}\,\left(C_6 \, \left(\|\alpha\|^2_{L^\infty L^\infty}+\|\nabla \alpha\|^2_{L^\infty L^3} +1+T+T^2 \right) T  \right)
(1+\overline{\tau}), \end{multlined}
\end{aligned}
\end{equation*}
where $C_5$, $C_6>0$ do not depend on $n$ or $\tau$. 
\end{theorem}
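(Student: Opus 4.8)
The plan is to mirror the Galerkin-plus-energy-estimate scheme of Theorems~\ref{th:wellposedness_lin_lower_tau} and~\ref{th:wellposedness_lin_lower}, but applied to the homogenized problem~\eqref{ibvp_linear_hom} for $\bar{\psi}=\psi-Ng$. Working with $\bar\psi$ is what makes the higher-order test function admissible: since $V_n$ is spanned by Neumann--Laplacian eigenfunctions, $-\Delta\bar\psi^n_{tt}\in V_n$ and carries homogeneous Neumann data, so all boundary terms produced by integration by parts in space vanish. Because $Ng$ solves $-\Delta v+v=0$ we have $\Delta Ng=Ng$, so the right-hand side collapses to $\tilde f:=f-\tau Ng_{ttt}-\alpha Ng_{tt}+c^2 Ng+b Ng_t$, each summand of which is controlled through the mapping property $N\colon H^s(\Gamma)\to H^{s+3/2}(\Omega)$ with constant $C_N$. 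Since the present hypotheses are strictly stronger than those of Theorem~\ref{th:wellposedness_lin_lower}, I would first record the lower-order $\tau$-independent bounds \eqref{energy_est_lin_tau_independent} (in particular $\|\psi_{tt}\|_{L^2L^2}$ and $\tau\|\psi_{tt}\|_{L^\infty L^2}$) and then superimpose a higher-order estimate, all uniformly in $n$.

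The heart of the argument is to test the Galerkin equation with $\phi=-\Delta\bar\psi^n_{tt}$ and integrate over $(0,t)$. Integration by parts in space turns the principal terms into a time derivative of $\tfrac{\tau}{2}|\nabla\bar\psi^n_{tt}|^2+\tfrac{b}{2}|\Delta\bar\psi^n_t|^2+c^2(\Delta\bar\psi^n,\Delta\bar\psi^n_t)$ together with the coercive contribution $\int_\Omega\alpha|\nabla\bar\psi^n_{tt}|^2\geq\underline{\alpha}|\nabla\bar\psi^n_{tt}|^2$. The indefinite cross term $c^2(\Delta\bar\psi^n(t),\Delta\bar\psi^n_t(t))$ and the surplus $c^2\int_0^t|\Delta\bar\psi^n_t|^2$ are handled with Young's inequality and the bound $|\Delta\bar\psi^n(t)|^2\leq T\int_0^t|\Delta\bar\psi^n_t|^2$ (valid since $\bar\psi^n(0)=0$), feeding a Gronwall argument that produces the $\exp(CT)$ factor in the constant.

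I expect the main obstacle to be the term $(\alpha\bar\psi^n_{tt},-\Delta\bar\psi^n_{tt})=\int_\Omega\alpha|\nabla\bar\psi^n_{tt}|^2+\int_\Omega\bar\psi^n_{tt}\,\nabla\alpha\cdot\nabla\bar\psi^n_{tt}$: the commutator involving $\nabla\alpha$ must be absorbed into the coercive term, and this is precisely what forces $\nabla\alpha\in L^\infty L^3$ and the smallness condition. Applying H\"older with exponents $(6,3,2)$ and the embedding $H^1\hookrightarrow L^6$ gives $\bigl|\int_\Omega\bar\psi^n_{tt}\,\nabla\alpha\cdot\nabla\bar\psi^n_{tt}\bigr|\leq C_{H^1,L^6}\|\nabla\alpha\|_{L^3}\|\bar\psi^n_{tt}\|_{H^1}\,|\nabla\bar\psi^n_{tt}|$; splitting $\|\bar\psi^n_{tt}\|_{H^1}\leq|\bar\psi^n_{tt}|+|\nabla\bar\psi^n_{tt}|$ and applying Young isolates a $|\nabla\bar\psi^n_{tt}|^2$ part whose prefactor stays strictly below $\underline{\alpha}$ exactly because $\|\nabla\alpha\|_{L^3}<\underline{\alpha}/(6C_{H^1,L^6})$, so it is absorbed, while the residual $\int_0^t|\bar\psi^n_{tt}|^2$ is lower order and controlled by the already-recorded $\|\psi_{tt}\|_{L^2L^2}$.

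It remains to estimate the forcing $(\tilde f,-\Delta\bar\psi^n_{tt})$ and to recover the $\psi_{ttt}$ and regularity statements. The delicate point for $f$ is that only $f,f_t\in L^2L^2$ are available with no spatial smoothness; I would therefore integrate the contribution $\int_0^t(f,-\Delta\bar\psi^n_{tt})$ by parts in time into $(f(t),-\Delta\bar\psi^n_t(t))-\int_0^t(f_t,-\Delta\bar\psi^n_t)$ and bound it using $\|f\|_{L^\infty L^2}\lesssim\|f\|_{H^1L^2}$ and $\|f_t\|_{L^2L^2}$ against $|\Delta\bar\psi^n_t|$, after which Gronwall closes the loop. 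The extension contributions are treated by a single spatial integration by parts (boundary terms again vanishing) and Young's inequality against $\int_0^t|\nabla\bar\psi^n_{tt}|^2$, which reproduces the weighted data norms $\tau^2\|g_{ttt}\|^2_{L^2H^{-1/2}}$, $\tau\|g_{tt}\|^2_{L^\infty H^{-1/2}}$ and $\|g\|^2_{H^2H^{-1/2}}$ on the right-hand side. The bound on $\tau^2\|\psi_{ttt}\|_{L^2L^2}$ then follows by solving the equation for $\tau\psi_{ttt}$ and invoking the control of $\Delta\psi$, $\Delta\psi_t$, $\alpha\psi_{tt}$ and $\tilde f$ in $L^2L^2$. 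Finally, I would extract a weakly/weakly-$\star$ convergent subsequence in the spaces dictated by the $n$-uniform bound, return to $\psi=\bar\psi+Ng$ (using $N\colon H^{1/2}(\Gamma)\to H^2(\Omega)$ and $g_t\in W^{1,\infty}H^{1/2}$ to place $\psi_t\in L^\infty H^2$), and deduce uniqueness from linearity together with the derived estimate.
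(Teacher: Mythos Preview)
Your proposal is correct and follows essentially the same route as the paper: homogenize via $\bar\psi=\psi-Ng$, test the Galerkin system with $-\Delta\bar\psi^n_{tt}$, absorb the $\nabla\alpha$ commutator using the smallness assumption and the $(6,3,2)$ H\"older split, integrate the $f$-term by parts in time, combine with the lower-order $\bar\psi^n_{tt}$-tested estimate to close Gronwall, bound $\tau\bar\psi^n_{ttt}$ directly from the equation, and then pass to the limit and undo the shift. The only cosmetic discrepancy is that the weighted term $\tau\|g_{tt}\|^2_{L^\infty H^{-1/2}}$ enters not from the $\bar\psi$-estimate itself but only when you add back $Ng$ to recover $\psi$; otherwise your outline matches the paper's proof step for step.
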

\begin{proof}
The proof follows along the lines of~\cite[Theorem 4.1]{kaltenbacher2019jordan} by employing Galerkin approximations in space of the solution $\bar{\psi}$ to \eqref{ibvp_linear_hom} and compactness arguments, but with a modification of energy estimates due to the new terms related to the extension operator. The solution of the original problem \eqref{ibvp_linear} is obtained afterwards as $\psi=\bar{\psi}+Ng$. \\
\indent We use the eigenfunctions $\{w_i\}_{i \in \mathbb{N}}$ of the homogeneous Neumann-Laplacian as the basis of $H^2_{\textup{N}}(\Omega):=\{v \in H^2(\Omega)\left \vert \right. \, \tfrac{\partial v}{\partial n}=0 \ \text{on } 
\Gamma\}$ and an orthonormal basis of $L^2(\Omega)$; cf.~\cite{Math-01-00318}. 
The Galerkin approximation $\bar{\psi}^n$ of $\bar{\psi}$ is defined by 
\begin{equation} \label{ibvp_semi-discrete_H2}
\begin{aligned} 
&(\tau \bar{\psi}^n_{ttt}+\alpha \bar{\psi}^n_{tt}, \phi)_{L^2}+(c^2 \nabla \bar{\psi}^n+b \nabla \bar{\psi}_t^n, \nabla \phi)_{L^2} \\
=&\, (f-\tau Ng_{ttt}-\alpha Ng_{tt}+c^2 \Delta Ng+b\Delta Ng_t, \phi)_{L^2},
\end{aligned}
\end{equation}
for every $\phi \in V_n$ pointwise a.e. in $(0,T)$, with $(\bar{\psi}^n(0), \bar{\psi}_t^n(0), \bar{\psi}^n_{tt}(0))=(0, 0, 0).$ As before, the existence of a solution $\bar{\psi}^n \in H^3(0,T; V_n)$ for the semi-discretization of the problem in $V_n=\text{span}\{w_1, \ldots, w_n\}$ follows from the standard ODE existence theory; see, for example,~\cite[Chapter 1]{Roubicek}. We focus our attention on deriving the crucial energy estimate. \\

\noindent \textbf{Energy estimate.} We note that $\phi=-\Delta \bar{\psi}^n_{tt}$ belongs to $V_n$ since $\bar{\psi}^n_{tt}$ is a linear combination of eigenfunctions of the Laplacian. Testing the semi-discrete problem with $\phi=-\Delta \bar{\psi}^n_{tt}$ and integrating over $(0,t)$, where $t \leq T$, yields the energy identity 
\begin{equation}\label{enid1}
\begin{aligned}
& \tfrac{\tau}{2} |\nabla \bar{\psi}^n_{tt}(t)|^2+\| \sqrt{\alpha} \nabla \bar{\psi}^n_{tt}\|^2_{L_t^2L^2}
+\tfrac{b}{2}|-\Delta \bar{\psi}_t^n(t)|^2 \\
=&\,-\int_0^t(\bar{\psi}^n_{tt}\nabla\alpha,\nabla \bar{\psi}^n_{tt})_{L^2} \textup{d}s\\
&+\int_0^t \left(-\tau Ng_{ttt}-\alpha Ng_{tt}+c^2 \Delta Ng+b\Delta Ng_t , -\Delta \bar{\psi}_{tt}^n\right)_{L^2}\, \textup{d}s\\
&-c^2 \left(-\Delta \bar{\psi}^n(t),-\Delta \bar{\psi}^n_{t}(t)\right)_{L^2}+c^2\int_0^t\left(-\Delta \bar{\psi}^n_{t},-\Delta \bar{\psi}^n_{t}\right)_{L^2}\, \textup{d}s\\
&+\left(f(t), -\Delta \bar{\psi}_t^n(t) \right)_{L^2}-\int_0^t \left(f_t,-\Delta \bar{\psi}_t^n \right)_{L^2}\, \textup{d}s.
\end{aligned}
\end{equation}
\indent Compared to the higher-regularity result with Dirichlet data~\cite[Theorem 4.1]{kaltenbacher2019jordan}, the main difference in deriving the energy estimates arises due to the appearance of integrals involving the extension of the inhomogeneous boundary data. We can estimate these terms in \eqref{enid1} as follows 
\begin{equation*}
\begin{aligned}
&\int_0^t \int_{\Gamma}(\tau Ng_{ttt}+\alpha N g_{tt})\, \Delta \bar{\psi}^n_{tt}\, \textup{d}x \textup{d}s\\
=& \, -\int_0^t \int_{\Omega} (\tau  \nabla Ng_{ttt}+\alpha \nabla N g_{tt}+N g_{tt}\nabla \alpha )\cdot \nabla \bar{\psi}^n_{tt} \, \textup{d}x \textup{d}s \\
\leq&\, \tau \|\nabla Ng_{ttt}\|_{L^2L^2} \|\nabla \bar{\psi}^n_{tt}\|_{L^2L^2}+\|\alpha\|_{L^\infty L^\infty} \|\nabla Ng_{tt}\|_{L^2L^2} \|\nabla \bar{\psi}^n_{tt}\|_{L^2L^2}\\
&+\|Ng_{tt}\|_{L^2L^6}\|\nabla \alpha\|_{L^\infty L^3}\|\nabla \bar{\psi}^n_{tt}\|_{L^2L^2}.
\end{aligned}
\end{equation*}
We recall that we can employ the fact that $ N \in \mathcal{L}(H^{-1/2}(\Gamma); H^1(\Omega))$ to further estimate the $N$-terms. Since $g(t)$, $g_t(t)$ $\in H^{1/2}(\Omega)$, \eqref{defN} holds in an $L^2(\Omega)$ sense for $Ng$ and $Ng_t$. We therefore find that
\begin{equation*}
\begin{aligned}
& -\int_0^t \int_{\Omega} (c^2 \Delta Ng+b\Delta Ng_t ) \Delta \bar{\psi}_{tt}^n \, \textup{d}x \textup{d}s\\
=& -\int_0^t \int_{\Omega} (c^2 Ng+b Ng_t ) \Delta \bar{\psi}_{tt}^n \, \textup{d}x \textup{d}s\\
\leq& \, \left(c^2 \|\nabla Ng\|_{L^2L^2}+b \|\nabla Ng_{t}\|_{L^2L^2}\right)\|\nabla \bar{\psi}^n_{tt}\|_{L^2L^2}.
\end{aligned}
\end{equation*}
By applying H\"older's inequality to treat the rest of the terms in \eqref{enid1} and the properties of the mapping $N$, we arrive at the estimate
\begin{equation*}
\begin{aligned}
&\tfrac{\tau}{2} |\nabla \bar{\psi}^n_{tt}(t)|^2+\underline{\alpha}\|\nabla \bar{\psi}^n_{tt}\|^2_{L_t^2L^2}
+\tfrac{b}{2}|-\Delta \bar{\psi}_t^n(t)|^2 \\\smallskip
\leq& \, \|\nabla \alpha\|_{L^\infty L^3}\|\bar{\psi}_{tt}^n\|_{L^2 L^6}\|\nabla \bar{\psi}_{tt}^n\|_{L_t^2L^2} 
+ c^2 |-\Delta \bar{\psi}^n (t)|_{L^2} |-\Delta \bar{\psi}_t^n(t)|_{L^2} \\
&+c^2 \|-\Delta \bar{\psi}^n_t\|_{L_t^2L^2}+\|f\|_{L^\infty L^2}|-\Delta \bar{\psi}_t^n(t)|_{L^2}+\|f_t\|_{L_t^2L^2}\|-\Delta \bar{\psi}_t^n\|_{L_t^2L^2}\\
&+\tau C_{N} \|g_{ttt}\|_{L^2 H^{-1/2}} \|\nabla \bar{\psi}^n_{tt}\|_{L^2L^2}+\|\alpha\|_{L^\infty L^\infty}C_{ N} \|g_{tt}\|_{L^2 H^{-1/2}} \|\nabla \bar{\psi}^n_{tt}\|_{L^2_t L^2}\\
&+C_{H^1, L^6}C_{ N}\|g_{tt}\|_{L^2 H^{-1/2}}\|\nabla \alpha\|_{L^\infty L^3}\|\nabla \bar{\psi}^n_{tt}\|_{L^2_tL^2}\\
&+C_{N}(c^2 \|g\|_{L^2 H^{-1/2}}+b \|g_{t}\|_{L^2 H^{-1/2}})\|\nabla \bar{\psi}^n_{tt}\|_{L^2_t L^2}.
\end{aligned}
\end{equation*}
We further estimate the right-hand side with the help of Young's $\varepsilon$-inequality 
for $\varepsilon \in \{b/8, 1/2, \varepsilon_0 \}$ and the standard embedding results to obtain
\begin{equation*}
\begin{aligned}
&\tfrac{\tau}{2} |\nabla \bar{\psi}^n_{tt}(t)|^2
+\tfrac{b}{2}|-\Delta \bar{\psi}_t^n(t)|^2+\underline{\alpha}\|\nabla \bar{\psi}^n_{tt}\|^2_{L_t^2L^2} \\
\leq& \, C_{H^1, L^6}\|\nabla \alpha \|_{L^\infty L^3}\|\bar{\psi}_{tt}^n\|^2_{L_t^2H^1} \\
&+\tfrac{2c^4}{b} |-\Delta \bar{\psi}^n(t)|^2_{L^2}+\tfrac{b}{8}|-\Delta \bar{\psi}_t^n (t)|^2_{L^2}+c^2 \|-\Delta \bar{\psi}^n_t\|_{L_t^2L^2} \\
&+\tfrac{2}{b} \|f\|^2_{L^\infty L^2}+\tfrac{b}{8}|-\Delta \bar{\psi}_t^n(t) |^2_{L^2}
+\tfrac12 \|f_t\|^2_{L^2L^2}+\tfrac12 \|-\Delta \bar{\psi}_t^n\|^2_{L_t^2L^2}\\
&+5\varepsilon_0 \|\nabla \bar{\psi}^n_{tt}\|^2_{L^2_t L^2}+\tfrac{1}{4 \varepsilon_0} C^2_{N}\left(\tau^2 \|g_{ttt}\|^2_{L^2 H^{-1/2}}+\|\alpha\|^2_{L^\infty L^\infty}\|g_{tt}\|^2_{L^2 H^{-1/2}} \right)\\
&+\tfrac{1}{4 \varepsilon_0}C^2_{N} \left(C_{H^1, L^6}^2\|g_{tt}\|^2_{L^2 H^{-1/2}}\|\nabla \alpha\|^2_{L^\infty L^3}+c^4 \|g\|^2_{L^2 H^{-1/2}}+b^2 \|g_{t}\|^2_{L^2 H^{-1/2}} \right).
\end{aligned}
\end{equation*}
The term $\|-\Delta \bar{\psi}^n(t)\|_{L^2}$ can be bounded as follows
\begin{align} \label{est_Delta_psi} 
\|-\Delta \bar{\psi}^n\|_{L_t^\infty L^2} \leq \sqrt{t} \|-\Delta \bar{\psi}_t^n\|_{L_t^2L^2},
\end{align}
since $\bar{\psi}^n(0)=0$. Altogether, we get
\begin{equation}\label{est_1}
\begin{aligned}
& \tfrac{\tau}{2} |\nabla \bar{\psi}^n_{tt}(t)|_{L^2}^2+(\underline{\alpha}-5 \varepsilon_0)\|\nabla \bar{\psi}^n_{tt}\|^2_{L_t^2L^2}
+\tfrac{b}{4}|-\Delta \bar{\psi}^n_t(t)|_{L^2}^2\\\smallskip
\leq& \, C_{H^1, L^6} \|\nabla \alpha\|_{L^\infty L^3}\|\bar{\psi}_{tt}^n\|^2_{L_t^2H^1} \\
&+ \tfrac{2c^4}{b} T \|-\Delta \bar{\psi}_t^n\|^2_{L_t^2L^2}
+c^2 \|-\Delta \bar{\psi}^n_t\|_{L_t^2L^2} \\
&+\tfrac{2}{b}\|f\|^2_{L^\infty L^2}+\frac12\|f_t\|^2_{L^2L^2}+\frac12\|-\Delta \bar{\psi}_t^n\|^2_{L_t^2L^2}\\
&+\tfrac{1}{4 \varepsilon_0} C^2_{N}\left(\tau^2 \|g_{ttt}\|^2_{L^2 H^{-1/2}}+\|\alpha\|^2_{L^\infty L^\infty}\|g_{tt}\|^2_{L^2 H^{-1/2}} \right)\\
&+\tfrac{1}{4 \varepsilon_0}(C_{H^1, L^6}C_{ N})^2\|g_{tt}\|^2_{L^2 H^{-1/2}}\|\nabla \alpha\|^2_{L^\infty L^3}\\
&+\tfrac{1}{4 \varepsilon_0}C^2_{N}\left(c^4 \|g\|^2_{L^2 H^{-1/2}}+b^2 \|g_{t}\|^2_{L^2 H^{-1/2}}\right).
\end{aligned}
\end{equation}
Note that we need a $\bar{\psi}_{tt}$ term in the $L^2$ spatial norm on the left-hand side in \eqref{est_1} to be able to employ Gronwall's inequality. We thus also have to test our problem with $\bar{\psi}^n_{tt}$ and use an estimate analogous to \eqref{West_first_est_discrete_alphapos}:
\begin{equation} \label{West_first_est_discrete_alphapos1}
\begin{aligned}
&\tau |\bar{\psi}^n_{tt}(t)|^2_{L^2}+\underline{\alpha}\|\bar{\psi}_{tt}\|^2_{L^2L^2}+\tfrac{b}{2}|\nabla \bar{\psi}^n_t(t)|^2_{L^2}+|\bar{\psi}^n_{t}(t)|^2_{L^2}\\
\leq&\, c^2 \left|\int_0^t \int_{\Omega} \nabla \bar{\psi}^n \cdot \nabla \bar{\psi}^n_{tt} \, \textup{d}x \textup{d}s \right|\\
&+\|\bar{\psi}^n_t\|_{L^2L^2}\|\bar{\psi}^n_{tt}\|_{L^2L^2}+ \|\tilde{f}\|_{L^2L^2} \|\bar{\psi}^n_{tt}\|_{L^2L^2}\,,
\end{aligned}
\end{equation}
where $\tilde{f}=f-\tau Ng_{ttt}-\alpha Ng_{tt}+c^2 Ng+bNg_t$.
Moreover, we have the bound on $\bar{\psi}^n_{ttt}$:
\begin{equation*} 
\begin{aligned}
&\tau \|\bar{\psi}^n_{ttt}\|_{L^2_tL^2}\\
\leq& \, \|\alpha \bar{\psi}^n_{tt}\|_{L^2_tL^2}+c^2\|-\Delta \bar{\psi}^n\|_{L^2_tL^2}+b\|-\Delta \bar{\psi}_t^n\|_{L^2_tL^2}+\|{\tilde{f}}\|_{L^2L^2},
\end{aligned}
\end{equation*}
from which, after also employing \eqref{est_Delta_psi},  we infer that
\begin{equation} \label{est_3}
\begin{aligned}
&\tau^2 \|\bar{\psi}^n_{ttt}\|^2_{L^2_tL^2}\\
\leq& \, 2\|\alpha\|^2_{L^\infty L^\infty}\|\bar{\psi}^n_{tt}\|^2_{L^2_tL^2}+2(c^2 T+b)^2\|-\Delta \bar{\psi}_t^n\|^2_{L^2_tL^2}+2\|{\tilde{f}}\|^2_{L^2L^2}.
\end{aligned}
\end{equation}
We choose $\varepsilon_0=\underline{\alpha}/6$, add \eqref{est_1} and \eqref{est_3} to 
\eqref{West_first_est_discrete_alphapos1}, apply Gronwall's inequality to the resulting estimate, and then take the supremum over $t\in(0,T_n)$, to get the estimate
\begin{equation} \label{1}
\begin{aligned}
& \begin{multlined}[c]\tau^2 \|\bar{\psi}^n_{ttt}\|^2_{L^2L^2}+\tau\|\bar{\psi}^n_{tt}\|^2_{L^\infty L^2}+ \tau \|\nabla \bar{\psi}^n_{tt}\|^2_{L^\infty L^2}\\
+\|\bar{\psi}^n_{tt}\|^2_{L^2 H^1}+\|- \Delta \bar{\psi}_t^n\|^2_{L^\infty L^2} \end{multlined}\\[1mm]
\leq&\,C(\alpha, T)  \begin{multlined}[t](\tau^2 \|g_{ttt}\|^2_{L^2 H^{-1/2}}+ \|g\|^2_{H^2 H^{-1/2}}+ \| f\|^2_{H^1 L^2}). \end{multlined}
\end{aligned}
\end{equation}
The constant above is given by
\begin{equation*}
\begin{aligned}
C(\alpha, T)= C_5 \,(&\|\alpha\|^2_{L^\infty L^\infty}+\|\nabla \alpha\|^2_{L^\infty L^3}+1)\\
&\times \textup{exp}\,\left(C_6  \left(\|\alpha\|^2_{L^\infty L^\infty}+\|\nabla \alpha\|^2_{L^\infty L^3} +1+T+T^2 \right) T \right)(1+\bar{\tau}),
\end{aligned}
\end{equation*}
where $C_5$, $C_6>0$ do not depend on $n$ or $\tau$. Note that the factor $1+\bar{\tau}$ in the constant above comes from the $\tau$ dependence of $b$ according to \eqref{b}. Since the right-hand side of \eqref{1} does not depend on $T_n$, we are allowed to extend the existence interval to $(0,T)$. \\
\indent Thanks to the derived estimate \eqref{1}, there exists a subsequence, denoted again by $\{\bar{\psi}^n\}_{n \in \mathbb{N}}$, and a function $\bar{\psi}$ such that
\begin{equation*} 
\begin{alignedat}{4} 
\bar{\psi}_{ttt}^n  &\relbar\joinrel\rightharpoonup \bar{\psi}_{ttt} &&\text{ weakly}  &&\text{ in } &&L^2(0,T;L^2(\Omega)),  \\
\bar{\psi}_{tt}^n  &\relbar\joinrel\rightharpoonup \bar{\psi}_{tt} &&\text{ weakly-$\star$}  &&\text{ in } &&L^\infty(0,T;H^1(\Omega)),  \\
\bar{\psi}_t^n &\relbar\joinrel\rightharpoonup \bar{\psi}_t &&\text{ weakly-$\star$} &&\text{ in } &&L^\infty(0,T; H^2(\Omega)).
\end{alignedat} 
\end{equation*}
It can be shown analogously to~\cite[Theorem 4.1]{kaltenbacher2019jordan} that $\bar{\psi} \in X^{W}$ solves \eqref{ibvp_linear_hom} and that estimate \eqref{1} holds with $\bar{\psi}^n$ replaced by $\bar{\psi}$. \\
\indent We then obtain $\psi=\bar{\psi}+Ng$ as the solution to \eqref{ibvp_linear} that satisfies \eqref{energy_est_lin_higher}. Note that we can conclude that $\psi=\bar{\psi}+Ng \in W^{1,\infty}(0,T;H^{2}(\Omega))$ since we assumed that $g \in W^{1,\infty}(0,T;H^{1/2}(\Gamma))$.
\end{proof}
\section{Existence of solutions for the relaxed JMGT equation} \label{sec:West_relaxed_fixedpoint}
We next show existence of solutions for the relaxed JMGT equation \eqref{WesterveltMC_relaxed} with $\tau>0$ by relying on Schauder's fixed-point theorem.

\begin{theorem}\label{thm:relaxed_JMGT}
	Let $c^2$, $b$, $k$, $\tau>0$, and let $T>0$. Assume that the function $h \in C^0(\mathbb{R})$ satisfies
	\begin{align}\tag{\ref{conditions_h}}
	\underline{\alpha} \leq h(s) \leq \overline{\alpha}, \quad \forall s \in \mathbb{R},
	\end{align}
	 and that $g \in H^2(0,T; H^{-1/2}(\Gamma))$ with $(g, g_t)\vert_{t=0}=(0, 0)$.
 Moreover, let
	\begin{equation*}
	\begin{aligned}
	\|g\|^2_{W^{1, \infty}H^{-1/2 }}+\|g_{t}\|^2_{H^1 H^{-1/2}} \leq \varrho.
	\end{aligned}
	\end{equation*}
Then for sufficiently small $\varrho$, there exists a solution $\psi$ of the problem
	\begin{equation} \label{ibvp_nonlinear_relaxed}
	\begin{aligned}
	\begin{cases}
	\tau\psi_{ttt}+h(\psi_t)\psi_{tt}-c^2\Delta \psi - b\Delta \psi_t = 0 \quad \mbox{ in }\Omega\times(0,T), \\[2mm]
	\dfrac{\partial \psi}{\partial n}=g \quad \mbox{ on } \Gamma\times(0,T),\\[2mm]
	(\psi, \psi_t, \psi_{tt})=(0, 0, 0) \quad \mbox{ in }\Omega\times \{0\},
	\end{cases}
	\end{aligned}
	\end{equation}
	in the weak $(H^{1})^{\star}$ sense such that
	\begin{equation*}
	\begin{aligned}
	\psi \in \, X= W^{1, \infty}(0;T;H^1(\Omega)) \cap W^{2, \infty}(0,T; L^2(\Omega)) \cap H^3(0,T;H^1(\Omega)^\star),
	\end{aligned}
	\end{equation*}
and the following estimate holds
	\begin{equation}\label{energy_est_nl_relaxed}
	\begin{aligned}
& \tau^2 \|\psi_{ttt}\|_{L^2 (H^1)^*}^2+\tau \|\psi_{tt}\|^2_{L^\infty L^2}+\|\psi_t\|^2_{L^\infty H^1}\\[1mm]
\leq&\, C(\tau, T)(\|g\|^2_{W^{1, \infty}H^{-1/2 }}+\|g_{t}\|^2_{H^1 H^{-1/2}}). 
	\end{aligned}
	\end{equation}
\end{theorem}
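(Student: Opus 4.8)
The plan is to set up a Schauder fixed-point argument built on the linear theory of Theorem~\ref{th:wellposedness_lin_lower_tau}. Given a function $\psi^*$ in the solution space $X$, I would freeze the coefficient by setting $\alpha := h(\psi^*_t)$. Since $h \in C^0(\mathbb{R})$ satisfies \eqref{conditions_h}, the composition obeys $\underline{\alpha} \le \alpha \le \overline{\alpha}$ a.e., so in particular $\alpha \in L^\infty(0,T;L^\infty(\Omega))$ with $\|\alpha\|_{L^\infty L^\infty} \le \overline{\alpha}$. Theorem~\ref{th:wellposedness_lin_lower_tau} (with $f=0$) then produces a unique weak solution $\psi \in X$ of \eqref{ibvp_linear}, and I define the solution operator $\mathcal{T}\colon \psi^* \mapsto \psi$. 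A fixed point of $\mathcal{T}$ is precisely a solution of \eqref{ibvp_nonlinear_relaxed}.

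Second, I would verify the self-mapping property. The crucial point is that the constant $C(\alpha,\tau,T)$ in \eqref{energy_est_lin}, given explicitly by \eqref{C_Theorem2.1}, depends on $\alpha$ only through $\|\alpha\|_{L^\infty L^\infty}$, which is bounded by $\overline{\alpha}$ uniformly in $\psi^*$. Hence, with $f=0$, every image $\psi = \mathcal{T}\psi^*$ satisfies the bound \eqref{energy_est_nl_relaxed} with $C(\tau,T) := C(\overline{\alpha},\tau,T)$. Choosing the closed ball
$$\mathcal{B} = \{\psi \in X : \|\psi\|_X^2 \le C(\tau,T)\,\varrho,\ (\psi,\psi_t,\psi_{tt})|_{t=0}=(0,0,0)\},$$
with $\|\cdot\|_X$ the natural norm of $X$, gives $\mathcal{T}(X)\subseteq \mathcal{B}$, and in particular $\mathcal{T}(\mathcal{B})\subseteq\mathcal{B}$. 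Because $h$ is globally bounded, this self-mapping holds for every $\varrho>0$; the smallness hypothesis is therefore not essential for the relaxed model and merely keeps the set-up consistent with the nondegenerate theory of Section~\ref{sec:West_fixedpoint}.

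Third, I would equip $\mathcal{B}$ with the weaker topology of $Z := H^1(0,T;L^2(\Omega))$ in order to gain compactness. The $X$-bounds give $\psi_t$ bounded in $L^\infty(0,T;H^1(\Omega))$ and $\psi_{tt}$ bounded in $L^\infty(0,T;L^2(\Omega))$; together with the compact embedding $H^1(\Omega)\hookrightarrow\hookrightarrow L^2(\Omega)$, the Aubin--Lions--Simon lemma shows that $\mathcal{B}$ is relatively compact in $Z$. Weak-$\star$ lower semicontinuity of the $X$-norm makes $\mathcal{B}$ closed in $Z$, so $\mathcal{B}$ is a compact convex subset of the Banach space $Z$, as required by Schauder's theorem.

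Finally, the continuity of $\mathcal{T}$ on $(\mathcal{B},\|\cdot\|_Z)$ is the step I expect to be the main obstacle. For a sequence $\psi^{*,k}\to\psi^*$ in $Z$, the $Z$-convergence forces $\psi^{*,k}_t\to\psi^*_t$ in $L^2(0,T;L^2(\Omega))$, hence (along a subsequence) a.e.\ in $\Omega\times(0,T)$. Continuity and boundedness of $h$ then yield $\alpha^k := h(\psi^{*,k}_t)\to h(\psi^*_t)=:\alpha$ a.e., and since $|\alpha^k|\le\overline{\alpha}$, dominated convergence gives $\alpha^k\to\alpha$ strongly in $L^p(0,T;L^p(\Omega))$ for every finite $p$. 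The images $\psi^k=\mathcal{T}\psi^{*,k}$ are uniformly bounded in $X$, so a subsequence converges weakly-$\star$ in $X$ to some $\tilde\psi$; the only term coupling coefficient to solution is $(\alpha^k\psi^k_{tt},\phi)_{L^2}=(\psi^k_{tt},\alpha^k\phi)_{L^2}$, where for fixed $\phi$ one has $\alpha^k\phi\to\alpha\phi$ strongly in $L^2(0,T;L^2(\Omega))$ and $\psi^k_{tt}\rightharpoonup\tilde\psi_{tt}$ weakly, so the product passes to the limit by weak--strong convergence. Thus $\tilde\psi$ is a weak solution of \eqref{ibvp_linear} with coefficient $\alpha=h(\psi^*_t)$, and uniqueness in Theorem~\ref{th:wellposedness_lin_lower_tau} identifies $\tilde\psi=\mathcal{T}\psi^*$; since the limit is independent of the subsequence, the whole sequence converges and $\mathcal{T}$ is continuous. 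Schauder's fixed-point theorem then yields a fixed point $\psi=\mathcal{T}\psi\in\mathcal{B}$, which solves \eqref{ibvp_nonlinear_relaxed}, and estimate \eqref{energy_est_nl_relaxed} is inherited from \eqref{energy_est_lin} with $\|\alpha\|_{L^\infty L^\infty}\le\overline{\alpha}$.
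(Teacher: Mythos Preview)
Your proposal is correct and follows the same Schauder fixed-point strategy as the paper, the only difference being that you apply classical Schauder in the strong topology of $Z=H^1(0,T;L^2(\Omega))$ (obtaining compactness via Aubin--Lions), whereas the paper works directly in the weak-$\star$ topology of $X$ and invokes the Schauder--Tychonoff version for locally convex spaces. Your observation that the smallness of $\varrho$ is not actually needed for the relaxed model---since the linear estimate depends on $\alpha$ only through $\|\alpha\|_{L^\infty L^\infty}\le\overline{\alpha}$---is also correct.
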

\begin{proof}
We introduce the mapping $\mathcal{F}:v \mapsto \psi$, where $v \in \mathcal{B}$
\begin{equation*}
\begin{aligned}
\mathcal{B}=\{v \in \, X : \, \tau^2 &\|v_{ttt}\|_{L^2 (H^1)^\star}^2+\tau \|v_{tt}\|^2_{L^\infty L^2}+\|v_t\|^2_{L^\infty H^1}\leq M \\[1mm]
& (v, v_t, v_{ttt})\vert_{t=0}=(0, 0, 0) \},
\end{aligned}
\end{equation*}
and $\psi$ solves
\begin{equation} \label{relaxed_linearization}
\begin{aligned}
\tau\psi_{ttt}+h(v)\psi_{tt}-c^2\Delta \psi - b\Delta \psi_t = 0 \mbox{ in }\Omega\times(0,T),
\end{aligned}
\end{equation}
in the weak sense with inhomogeneous Neumann conditions and zero initial conditions. We note that the set $\mathcal{B}$ is non-empty, weakly$-\star$ compact, and convex, and that the mapping $\mathcal{F}$ is well-defined thanks to Theorem~\ref{th:wellposedness_lin_lower_tau}.\\
\indent We can achieve that $\mathcal{F}(\mathcal{B}) \subset \mathcal{B}$ for sufficiently small $\varrho$. Indeed, let $v \in \mathcal{B}$. Then, on account of Theorem~\ref{th:wellposedness_lin_lower_tau} and estimate \eqref{energy_est_lin} for $f=0$, we know that
	\begin{equation*}
	\begin{aligned}
& \tau^2 \|\psi_{ttt}\|_{L^2 (H^1)^\star}^2+\tau \|\psi_{tt}\|^2_{L^\infty L^2}+\|\psi_t\|^2_{L^\infty H^1}\\[1mm]
\leq&\, C(\tau, T)(\|g\|^2_{W^{1, \infty}H^{-1/2 }}+\|g_{t}\|^2_{H^1 H^{-1/2}}). 
	\end{aligned}
	\end{equation*}
From here it follows that $\psi \in \mathcal{B}$ when $\varrho$ is sufficiently small so that $C(\tau, T) \varrho \leq M$ holds. \\[1mm]
\noindent \textbf{Weak$^\star$ continuity}. We want to show that $\mathcal{F}: \mathcal{B} \rightarrow \mathcal{B}$ is weak$^\star$ continuous. Let $\{v^{n}\}_{n \in \mathbb{N}} \subset \mathcal{B}$  be a sequence that weakly$^\star$ converges to $v$ in $X$. Denote $\psi^n =\mathcal{F}(v^n) \in \mathcal{B}$ and $\psi=\mathcal{F}(v) \in \mathcal{B}$. Thanks to the uniform bound provided by Theorem~\ref{th:wellposedness_lin_lower_tau} and standard compactness results, there exists a subsequence, that we do not relabel, and a function $\varphi \in \mathcal{B}$ such that 
\begin{equation*} 
\begin{alignedat}{4} 
\psi_{ttt}^n  &\relbar\joinrel\rightharpoonup \varphi_{ttt} &&\text{ weakly}  &&\text{ in } &&L^2(0,T; (H^1(\Omega))^\star),  \\
\psi_{tt}^n  &\relbar\joinrel\rightharpoonup \varphi_{tt} &&\text{ weakly-$\star$}  &&\text{ in } &&L^\infty(0,T;L^2(\Omega)),  \\
\psi_t^n &\relbar\joinrel\rightharpoonup \varphi_t &&\text{ weakly-$\star$} &&\text{ in } &&L^\infty(0,T; H^1(\Omega)).
\end{alignedat} 
\end{equation*}
Note that by continuity of $h$, we have $h(v^n) \rightarrow h(v)$ a.e. in $\Omega \times (0,T)$. It is then straightforward to check that $\varphi$ solves \eqref{relaxed_linearization}, from which it follows that $\varphi=\psi$ since $\psi$ is the unique solution. We then conclude by a subsequence-subsequence argument that $\{\mathcal{F}(v^n)\}_{n \in \mathbb{N}}$ converges weakly$-\star$ to $\psi$.
\\
\indent The statement now follows by employing Schauder's fixed-point theorem; cf. \cite{Fan1952}.
\end{proof}
\begin{remark}\label{rem:contractivity}
\indent Let $v^{(1)}, v^{(2)} \in \mathcal{B}$ and $\psi^{(1)}=\mathcal{F}(v^{(1)})$, $\psi^{(2)}=\mathcal{F}(v^{(2)}) \in \mathcal{B}$. The difference $\psi=\psi^{(1)}-\psi^{(2)}$ then solves
\begin{equation} \label{contractivity_lower}
\begin{aligned}
\tau \psi_{ttt}+h(v_t^{(1)})\psi_{tt}-c^2 \Delta \psi-b \Delta \psi_t=-(h(v_t^{(1)})-h(v_t^{(2)}))\psi^{(2)}_{tt}
\end{aligned}
\end{equation}
in the weak sense with zero boundary and initial data. To show contractivity of the mapping $\mathcal{F}$, we would need higher regularity of solutions that would -- together with Lipschitz continuity of $h$ with constant $L$ -- allow for the right hand side of \eqref{contractivity_lower} to be estimated as
\begin{equation} \label{contractivity_lower_estrhs}
\|(h(v_t^{(1)})-h(v_t^{(2)}))\psi^{(2)}_{tt}\|_{L^2L^2} \leq L
\|v_t^{(1)}-v_t^{(2)}\|_{L^\infty L^4}\|\psi^{(2)}_{tt}\|_{L^2L^4}.
\end{equation}
This is not possible with the lower order energy estimate from Theorem \ref{th:wellposedness_lin_lower}, but will be enabled by Theorem \ref{th:wellposedness_lin_higher} in the next section.
\end{remark}
\begin{remark}
In case the condition \eqref{conditions_h} is replaced by a non-degeneracy condition:
	\begin{align}
	0 < \underline{\alpha} \leq h(s) \leq \overline{\alpha}, \quad \forall s \in \mathbb{R},
	\end{align}
it can be shown by relying on Theorem~\ref{th:wellposedness_lin_lower} that the bound \eqref{energy_est_lin} is uniform with respect to $\tau$.	
\end{remark}

\section{Well-posedness of the JMGT equation} \label{sec:West_fixedpoint}
Based on the higher-order regularity result of Theorem \ref{th:wellposedness_lin_higher}, we can now use a contraction principle to prove well-posedness of the JMGT equation \eqref{WesterveltMC} with $\tau>0$, as well as an energy bound that is uniform in $\tau$.

\begin{theorem}\label{thm:JMGT}
	Let $c^2$, $b$, $\tau>0$, and let $T>0$, $k \in \mathbb{R}$. 
    Assume that $g \in H^3(0,T; H^{-1/2}(\Gamma))\cap W^{1,\infty}(0,T;H^{1/2}(\Gamma))$ with
    $(g, g_t , g_{tt})\vert_{t=0}=(0, 0, 0)$, and that
	\begin{equation*}
	\begin{aligned}
    \|g\|^2_{W^{1,\infty} H^{1/2}}+\|g\|^2_{H^2H^{-1/2}}+\tau \|g_{tt}\|^2_{L^\infty H^{-1/2}}+\tau^2\|g_{ttt}\|^2_{L^2 H^{-1/2}}\leq \varrho.
	\end{aligned}
	\end{equation*}
Then for sufficiently small $\varrho$, there exists a unique solution $\psi$ of the problem
	\begin{equation} \label{ibvp_nonlinear}
	\begin{aligned}
	\begin{cases}
	\tau\psi_{ttt}+(1-2k\psi_t)\psi_{tt}-c^2\Delta \psi - b\Delta \psi_t = 0 \quad \mbox{ in }\Omega\times(0,T), \\[2mm]
	\dfrac{\partial \psi}{\partial n}=g \quad \mbox{ on } \Gamma\times(0,T),\\[2mm]
	(\psi, \psi_t, \psi_{tt})=(0, 0, 0) \quad \mbox{ in }\Omega\times \{0\},
	\end{cases}
	\end{aligned}
	\end{equation}
	in the strong $L^2$ sense that satisfies 
	\begin{equation*} 
	\begin{aligned}
	\psi \in \, X= W^{1, \infty}(0;T;H^2(\Omega)) \cap W^{2, \infty}(0,T; H^1(\Omega))
\cap H^3(0,T;L^2(\Omega))
	\end{aligned}
	\end{equation*}
and the estimate
\begin{equation} \label{energy_est_nl_higher}
\begin{aligned}
& \tau^2 \|\psi_{ttt}\|_{L^2L^2}+\tau\|\psi_{tt}\|^2_{L^\infty H^1}
+\|\psi_{tt}\|^2_{L^2 H^1}+\|- \Delta \psi_t\|^2_{L^\infty L^2}\\[1mm]
\leq&\,C(T)  \begin{multlined}[t]\left(\|g_t\|^2_{L^{\infty} H^{1/2}}+\|g\|^2_{H^2H^{-1/2}}+\tau \|g_{tt}\|^2_{L^\infty H^{-1/2}}\right.\\ \left. +\tau^2\|g_{ttt}\|^2_{L^2 H^{-1/2}} \right).\end{multlined}
\end{aligned}
\end{equation}
\end{theorem}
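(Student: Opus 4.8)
The plan is to solve the quasilinear problem \eqref{ibvp_nonlinear} by a fixed-point iteration built on the linear theory of Theorem~\ref{th:wellposedness_lin_higher}, carrying out the self-mapping in the strong norm of $X$ and the contractivity in a weaker norm, exactly as anticipated in Remark~\ref{rem:contractivity}. Concretely, I would introduce the map $\mathcal{F}\colon v\mapsto\psi$, where for $v$ in the ball
\[
\mathcal{B}=\Bigl\{\, v\in X \ :\ \mathcal{E}(v)\le R^2,\ (v,v_t,v_{tt})\vert_{t=0}=(0,0,0),\ \tfrac{\partial v}{\partial n}=g \,\Bigr\},
\]
with $\mathcal{E}(v)$ the energy on the left-hand side of \eqref{energy_est_lin_higher}, the image $\psi$ solves the linearized equation \eqref{WesterveltMC_lin} with frozen coefficient $\alpha=1-2kv_t$, inhomogeneous Neumann data $g$, and zero initial data. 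Via $H^2$ elliptic regularity for the Neumann Laplacian together with $v_t(0)=0$, the term $\|-\Delta v_t\|_{L^\infty L^2}$ in $\mathcal{E}$ controls $\|v_t\|_{L^\infty H^2}$ up to the boundary datum $g_t$, so membership in $\mathcal{B}$ yields a quantitative bound on $\|v_t\|_{L^\infty H^2}$.

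First I would verify that $\mathcal{F}$ is well defined and maps $\mathcal{B}$ into itself. Writing $\alpha=1-2kv_t$, the embeddings $H^2(\Omega)\hookrightarrow L^\infty(\Omega)$ and $H^1(\Omega)\hookrightarrow L^6(\Omega)\hookrightarrow L^3(\Omega)$ (valid since $d=\dim\Omega\le 3$) give $\|\alpha\|_{L^\infty L^\infty}\le 1+2|k|C\|v_t\|_{L^\infty H^2}$ and $\|\nabla\alpha\|_{L^\infty L^3}\le 2|k|C\|v_t\|_{L^\infty H^2}$. Choosing the radius $R$ small enough, the first bound forces $\alpha\ge\underline{\alpha}:=1-2|k|CR>0$ (the physically crucial non-degeneracy), while the second keeps $\|\nabla\alpha\|_{L^\infty L^3}$ below the threshold $\underline{\alpha}/(6\,C_{H^1,L^6})$ required by Theorem~\ref{th:wellposedness_lin_higher}. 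All hypotheses of that theorem then hold with $f=0$, so $\psi\in X$ and \eqref{energy_est_lin_higher} holds with $C(\alpha,T)$ dominated by a constant $C(R,T)$; hence $\mathcal{E}(\psi)\le C(R,T)\varrho$. A second smallness requirement, $C(R,T)\varrho\le R^2$, secures $\mathcal{F}(\mathcal{B})\subseteq\mathcal{B}$. I would fix $R$ first (from the non-degeneracy, smallness, and forthcoming contraction constraints) and only afterwards shrink $\varrho$.

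The crux is contractivity, and this is precisely where the higher regularity is indispensable. For $v^{(1)},v^{(2)}\in\mathcal{B}$ the difference $\psi=\mathcal{F}(v^{(1)})-\mathcal{F}(v^{(2)})$ solves, as in \eqref{contractivity_lower},
\[
\tau\psi_{ttt}+(1-2kv^{(1)}_t)\psi_{tt}-c^2\Delta\psi-b\Delta\psi_t=2k\,(v^{(1)}_t-v^{(2)}_t)\,\psi^{(2)}_{tt}
\]
with homogeneous boundary and initial data. Since $\psi^{(2)}\in X$ yields $\psi^{(2)}_{tt}\in L^2(0,T;H^1)\hookrightarrow L^2(0,T;L^4)$, the right-hand side lies in $L^2L^2$, with
\[
\|2k(v^{(1)}_t-v^{(2)}_t)\psi^{(2)}_{tt}\|_{L^2L^2}\le 2|k|\,C_{H^1,L^4}^2\,\|v^{(1)}_t-v^{(2)}_t\|_{L^\infty H^1}\,\|\psi^{(2)}_{tt}\|_{L^2H^1},
\]
which is exactly the estimate \eqref{contractivity_lower_estrhs} that was out of reach in the relaxed setting. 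Because the coefficient $1-2kv^{(1)}_t$ is non-degenerate on $\mathcal{B}$, I would then invoke the $\tau$-uniform lower-order a priori estimate \eqref{energy_est_lin_tau_independent} of Theorem~\ref{th:wellposedness_lin_lower}, whose constant is independent of $\alpha$, applied to the difference equation with vanishing boundary data, to obtain
\[
\|\psi_t\|_{L^\infty H^1}\le q\,\|v^{(1)}_t-v^{(2)}_t\|_{L^\infty H^1},\qquad q=2|k|\,C_{H^1,L^4}^2\,\sqrt{C(T)}\,R.
\]
Shrinking $R$ renders $q<1$, so $\mathcal{F}$ contracts in the metric $d(v,w)=\|v_t-w_t\|_{L^\infty H^1}$.

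Finally I would conclude with Banach's fixed-point theorem. The one genuinely technical point is that $\mathcal{B}$ carries the strong $X$-bound but only the weaker metric $d$: I would check that $(\mathcal{B},d)$ is complete by observing that a $d$-Cauchy sequence converges in $L^\infty H^1$ to a limit whose membership in $\mathcal{B}$ follows from weak-$\star$ sequential compactness of the $X$-ball together with weak-$\star$ lower semicontinuity of $\mathcal{E}$. The resulting unique fixed point $\psi=\mathcal{F}(\psi)$ is the sought solution of \eqref{ibvp_nonlinear}, and uniqueness within $\mathcal{B}$ is immediate from the contraction. The energy estimate \eqref{energy_est_nl_higher} is then read off from \eqref{energy_est_lin_higher} applied to $\psi$ with $\alpha=1-2k\psi_t$ and $f=0$, the constant collapsing to $C(T)$ because the $\alpha$-norms are controlled uniformly on $\mathcal{B}$. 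I expect the main obstacle to be this two-norm interplay: the quadratic forcing can only be closed in $L^2L^2$ by exploiting the $H^2$ regularity produced by the self-map, whereas the contraction must be run in the weaker $H^1$-metric, for which the linear estimate of Theorem~\ref{th:wellposedness_lin_lower} carries the essential $\alpha$-independent (and $\tau$-uniform) constant.
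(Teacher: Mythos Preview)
Your proposal is correct and follows essentially the same route as the paper: the fixed-point map $\mathcal{F}$ with frozen coefficient $\alpha=1-2kv_t$, self-mapping on an $X$-ball via Theorem~\ref{th:wellposedness_lin_higher} after checking non-degeneracy and the $\|\nabla\alpha\|_{L^\infty L^3}$ smallness through $\|v_t\|_{L^\infty H^2}$, contraction in the weaker norm of Theorem~\ref{th:wellposedness_lin_lower} using exactly the $L^4$--$L^4$ estimate \eqref{contractivity_lower_estrhs}, and closedness of $\mathcal{B}$ in the weak metric via weak-$\star$ compactness. The only cosmetic differences are that the paper controls $\|v_t\|_{L^\infty H^2}$ through $(-\Delta+\mathrm{id})^{-1}$ combined with $\|v_{tt}\|_{L^2H^1}$ rather than elliptic regularity with the boundary datum, and runs the contraction in the full norm $|||\cdot|||$ of \eqref{energy_est_lin_tau_independent} rather than just $\|v_t-w_t\|_{L^\infty H^1}$; neither change affects the argument.
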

\begin{proof}
The proof goes along the lines of the proof of Theorem \ref{thm:relaxed_JMGT}, with the obvious modifications of topologies according to the stronger energies enabled by Theorem \ref{th:wellposedness_lin_higher}, as well as a contraction argument in place of Schauder's fixed-point theorem. \\
\indent We again use the fixed-point operator $\mathcal{F}$ from the proof of Theorem \ref{thm:relaxed_JMGT} with the particular choice $h(z)=1-2kz$ and show that it is a self-mapping on the set
\begin{equation}
\begin{aligned}
\mathcal{B}=\{v \in \, X : \, &\!\begin{multlined}[t]\tau^2 \|v_{ttt}\|_{L^2 L^2}^2+\tau \|v_{tt}\|^2_{L^\infty H^1}\\+\|v_{tt}\|^2_{L^2 H^1}+\|-\Delta v_t\|^2_{L^\infty L^2}\leq M,\end{multlined} \\[1mm]
& (v, v_t, v_{ttt})\vert_{t=0}=(0, 0, 0) \},
\end{aligned}
\end{equation}
with $M$ chosen appropriately, provided that $\varrho$ is sufficiently small.\\[2mm]
\noindent \textbf{\mathversion{bold}$\mathcal{F}$ is a self-mapping}.  For proving that $\mathcal{F}$ is a self-mapping, we use Theorem~\ref{th:wellposedness_lin_higher} in place of Theorem~\ref{th:wellposedness_lin_lower_tau}, where we choose $\alpha=1-2kv_t$ for $v\in \mathcal{B}$. This additionally requires to prove smallness of $\|1-\alpha\|_{L^\infty L^\infty}$ in order to establish non-degeneracy with a uniform constant $\ul{\alpha}$ and of $\|\nabla \alpha\|_{L^\infty L^3}$. We first note that
\begin{equation*} 
\begin{aligned}
\|v_t\|_{L^\infty H^2}&
\leq \|(-\Delta+\mbox{id})^{-1}\|_{L^2\to H^2} \Bigl(\sqrt{T}\|v_{tt}\|_{L^2 H^1}+\|-\Delta v_t\|_{L^\infty L^2}\Bigr)\\
&\leq C(T,\Omega) \sqrt{\|v_{tt}\|_{L^2 H^1}^2+\|-\Delta v_t\|_{L^\infty L^2}^2}\,,
\end{aligned}
\end{equation*}
where $(-\Delta+\mbox{id})$ is equipped with homogeneous Neumann boundary conditions on $\Gamma$. In other words, for $v\in L^2(\Omega)$, $z=(-\Delta+\mbox{id})^{-1}v$ solves
\begin{equation*}
\begin{aligned}
-\Delta z+z=&\, v \ \mbox{ in }\Omega,\\
\frac{\partial z}{\partial n}=& \, 0 \ \mbox{ on }\Gamma\,.
\end{aligned}
\end{equation*}
Therefore, it holds that
\[
\begin{aligned}
\|1-\alpha\|_{L^\infty L^\infty}=2|k|\, \|v_t\|_{L^\infty L^\infty}\leq 2|k| C_{H^2,L^\infty} C(T,\Omega) \sqrt{M}\,,\\
\|\nabla\alpha\|_{L^\infty L^3}=2|k|\, \|\nabla v_t\|_{L^\infty L^3}\leq 2|k| C_{H^1,L^3} C(T,\Omega) \sqrt{M}.
\end{aligned}
\]
Using energy estimate \eqref{energy_est_lin_higher} for the linearized JMGT equation with $f=0$ and choosing $\varrho$ and $M$ sufficiently small yields $\mathcal{F}(v)\in\mathcal{B}$.  \\[2mm]
\noindent \textbf{\mathversion{bold}$\mathcal{F}$ is contractive.} For proving contractivity, we can directly make use of estimate \eqref{contractivity_lower_estrhs} in Remark \ref{rem:contractivity} with $L=2|k|$, and the result on $H^1$ regularity with $\tau$ independent energy bound Theorem \ref{th:wellposedness_lin_lower}, as well as the fact that by the already shown self-mapping property of $\mathcal{F}$, we have that $\psi^{(2)}=\mathcal{F}(v^{(2)})\in\mathcal{B}$. This provides us with the bound 
$\|\psi^{(2)}_{tt}\|_{L^2L^4}\leq C_{H^1,L^4}^\Omega \sqrt{M}$, which by possibly decreasing $M$ yields contractivity of $\mathcal{F}$ in the norm induced by the energy of Theorem \ref{th:wellposedness_lin_lower}:
$$|||v|||:=\sqrt{\tau^2 \|v_{ttt}\|_{L^2 (H^1)^*}^2+\tau \|v_{tt}\|^2_{L^\infty L^2}+\|v_{tt}\|^2_{L^2L^2}+\|v_t\|^2_{L^\infty H^1}}.$$
~\\[-1mm]
\noindent \textbf{\mathversion{bold}$\mathcal{B}$ is closed.} Closedness of $\mathcal{B}$ with respect to this norm can be seen as follows. For any sequence $(\psi_k)_k\in\mathbb{N}\subseteq\mathcal{B}$ converging with respect to $|||\cdot|||$ with limit $\psi$, we have $$||||\psi_k||||^2:=\tau^2 \|v_{ttt}\|_{L^2 L^2}^2+\tau \|v_{tt}\|^2_{L^\infty H^1}+\|v_{tt}\|^2_{L^2 H^1}+\|-\Delta v_t\|^2_{L^\infty L^2}\leq M.$$ Indeed, due to the imposed homogeneous initial conditions, $||||\cdot||||$ defines a norm equivalent to the norm on $\tilde{X}:=H^3(0,T;L^2(\Omega))\cap W^{2,\infty}(0,T;H^1(\Omega))\cap W^{1,\infty}(0,T;H^2(\Omega))$, which is the dual of a separable space. Hence $(\psi_k)_k\in\mathbb{N}$ has a subsequence that converges in the weak* topology of $\tilde{X}$ to some $\bar{\psi}$ that by weak* semicontinuity of the norm lies in $\mathcal{B}$. By uniqueness of limits $\psi$ has to coincide with $\bar{\psi}$ and therefore lies in $\mathcal{B}$.

Altogether, this yields unique existence of a fixed point of $\mathcal{F}$, i.e., of a solution to \eqref{ibvp_nonlinear} in $\mathcal{B}$.
\end{proof}
\begin{remark}
Compared to \cite{KLP12_JordanMooreGibson}, where a pressure formulation of the JMGT  
\[
\tau p_{ttt}+p_{tt}-c^2\Delta p - b\Delta p_t = \tilde{k}(p)^2_{tt}, 
\]
along with homogeneous Dirichlet boundary conditions is considered, and also results on global existence and exponential decay are provided, we here focus on local in time well-posedness only, but extend the setting to inhomogeneous Neumann and absorbing boundary conditions. Due to the differences in formulation (pressure versus velocity potential) and energy estimates also the outcome of the local results in \cite{KLP12_JordanMooreGibson} and Theorem \ref{thm:JMGT} here slightly differ. \cite[Theorem 1.4]{KLP12_JordanMooreGibson} states $p\in W^{2,\infty}(0,T;L^2(\Omega))\cap W^{1,\infty}(0,T;H_0^1(\Omega))\cap L^\infty(0,T;H^2(\Omega)\cap H_0^1(\Omega))$ for sufficiently small initial data $(p(0),p_t(0),p_{tt}(0))\in (H^2(\Omega)\cap H_0^1(\Omega)) \times H_0^1(\Omega)\times L^2(\Omega)$.
\end{remark}

\section{Singular limit for vanishing relaxation time} \label{sec:Singular_limit}

We now study the limiting behavior of solutions $\psi^\tau$ to the JMGT equation \eqref{WesterveltMC} as the relexation time $\tau$ tends to zero. Our goal is to prove convergence in a certain sense to a solution $\bar{\psi}$ of the Westervelt equation \eqref{Westervelt}.\\
\indent A crucial prerequisite for this purpose is the fact that the energy estimate in Theorem \eqref{thm:JMGT} holds uniformly with respect to $\tau$ and that the bound $\rho$ on the data can be chosen independently of $\tau\in(0,\bar{\tau}]$ for any fixed $\bar{\tau}>0$. This will provide us with a uniform bound for the $\tau$-independent part of the energy. In other words, we will derive a uniform bound on $\|\psi^{\tau}\|_{\bar{X}^W}$, where 
\[
\bar{X}^W=\{v\in H^2(0,T;H^1(\Omega))\cap W^{1,\infty}(0,T;H^2(\Omega)): \, v(0)=0, \, v_t(0)=0\}.
\]
\indent Note that the initial conditions imposed in the definition of $\bar{X}^W$ are well-defined in an $H^2(\Omega)$ and $H^1(\Omega)$ sense, respectively, since $\bar{X}^W$ embeds continuously into $C(0,T;H^2(\Omega))\cap C^1(0,T;H^1(\Omega))$.
Therewith, the $\tau$-independent part of the energy defines a norm on $\bar{X}^W$
\[
\|v\|_{\bar{X}^W}=\sqrt{\|v_{tt}\|^2_{L^2 H^1}+\|- \Delta v_t\|^2_{L^\infty L^2}}\,.
\]

\begin{theorem} \label{th:limits}
	Let $c^2$, $b$, $T>0$, $\bar{\tau}>0$, and $k \in \mathbb{R}$. Then there exist $\varrho>0$ such that for all $g \in H^3(0,T; H^{-1/2}(\Gamma))\cap W^{1,\infty}(0,T;H^{1/2}(\Gamma))$ that satisfy $(g, g_t , g_{tt})\vert_{t=0}=(0, 0, 0)$ and 
	\begin{equation*}
	\begin{aligned}
    \|g\|^2_{W^{1,\infty} H^{1/2}}+\|g\|^2_{H^2H^{-1/2}}+\bar{\tau} \|g_{tt}\|^2_{L^\infty H^{-1/2}}+\bar{\tau}^2\|g_{ttt}\|^2_{L^2 H^{-1/2}}\leq \varrho,
	\end{aligned}
	\end{equation*}
    the family $(\psi^\tau)_{\tau\in(0,\bar{\tau})}$ of solutions to \eqref{ibvp_nonlinear} 
	according to Theorem~\ref{thm:JMGT} 
	converges weakly-$\star$ in $\bar{X}^W$ to a solution $\bar{\psi}\in \bar{X}^W$ of \eqref{Westervelt} with homogeneous initial
    conditions $\bar{\psi}(0)=0$, $\bar{\psi}_t(0)=0$, and Neumann boundary conditions $\frac{\partial \bar{\psi}}{\partial n}\vert_{\Gamma}=g$.
\end{theorem}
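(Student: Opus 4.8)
The plan is to exploit the $\tau$-uniformity of the energy estimate in Theorem~\ref{thm:JMGT}, extract a weak-$\star$ convergent subsequence from the bounded family $(\psi^\tau)$, pass to the limit term by term in the weak formulation of \eqref{ibvp_nonlinear}, and identify the limit as the (unique) Westervelt solution. First I would record the uniform bounds. Since $\tau\le\bar{\tau}$, the data bound assumed here dominates, for every individual $\tau$, the one required in Theorem~\ref{thm:JMGT}, while the constant $C(T)$ in \eqref{energy_est_nl_higher} carries only the harmless factor $(1+\bar{\tau})$; hence the smallness threshold $\varrho$ and the resulting energy constant can be chosen independently of $\tau\in(0,\bar{\tau}]$. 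Reading off the $\tau$-independent part of \eqref{energy_est_nl_higher} gives $\|\psi^\tau\|_{\bar{X}^W}\le C$ uniformly, together with the $\tau$-weighted bounds $\tau\|\psi^\tau_{tt}\|^2_{L^\infty H^1}\le C$ and $\tau^2\|\psi^\tau_{ttt}\|^2_{L^2 L^2}\le C$. By weak/weak-$\star$ compactness of bounded sets in the relevant Bochner spaces, there exist $\tau_n\to0$ and $\bar{\psi}\in\bar{X}^W$ with $\psi^{\tau_n}_{tt}\rightharpoonup\bar{\psi}_{tt}$ weakly in $L^2(0,T;H^1)$ and $-\Delta\psi^{\tau_n}_t\rightharpoonup-\Delta\bar{\psi}_t$ weakly-$\star$ in $L^\infty(0,T;L^2)$, the latter yielding $\psi^{\tau_n}_t\rightharpoonup\bar{\psi}_t$ weakly-$\star$ in $L^\infty(0,T;H^2)$ by elliptic regularity for the Neumann Laplacian. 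Since $\bar{X}^W$ encodes the homogeneous initial conditions as continuous linear constraints, $\bar{\psi}(0)=0$ and $\bar{\psi}_t(0)=0$ are inherited automatically.

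The main obstacle is the passage to the limit in the quadratic term $-2k\psi^\tau_t\psi^\tau_{tt}$, for which weak convergence alone is insufficient. Here I would invoke an Aubin--Lions--Simon argument: $\{\psi^\tau_t\}$ is bounded in $L^\infty(0,T;H^2(\Omega))$ with time derivative $\{\psi^\tau_{tt}\}$ bounded in $L^2(0,T;H^1(\Omega))$, and fixing $s\in(3/2,2)$ one has the compact-then-continuous chain $H^2(\Omega)\hookrightarrow\hookrightarrow H^s(\Omega)\hookrightarrow H^1(\Omega)$. This gives relative compactness of $\{\psi^\tau_t\}$ in $C([0,T];H^s(\Omega))$, and since the embedding $H^s(\Omega)\hookrightarrow L^\infty(\Omega)$ holds for $s>3/2$, a further subsequence satisfies $\psi^{\tau_n}_t\to\bar{\psi}_t$ strongly in $C([0,T];L^\infty(\Omega))$. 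Writing $\psi^{\tau_n}_t\psi^{\tau_n}_{tt}-\bar{\psi}_t\bar{\psi}_{tt}=(\psi^{\tau_n}_t-\bar{\psi}_t)\psi^{\tau_n}_{tt}+\bar{\psi}_t(\psi^{\tau_n}_{tt}-\bar{\psi}_{tt})$, the first summand tends to zero by strong convergence combined with the $L^2(0,T;L^2)$ bound on $\psi^{\tau_n}_{tt}$, while the second vanishes by weak convergence of $\psi^{\tau_n}_{tt}$ tested against $\bar{\psi}_t\phi\in L^2(0,T;L^2)$.

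It then remains to dispose of the $\tau$-dependent terms. The third-order term vanishes because $\tau\psi^\tau_{tt}\to0$ strongly in $L^2(0,T;L^2)$; indeed $\tau\|\psi^\tau_{tt}\|_{L^2 L^2}\le\sqrt{T}\,\sqrt{\tau}\,\bigl(\sqrt{\tau}\|\psi^\tau_{tt}\|_{L^\infty L^2}\bigr)\le\sqrt{T}\,\sqrt{\tau}\,\sqrt{C}\to0$, whence $\tau\psi^\tau_{ttt}=\partial_t(\tau\psi^\tau_{tt})\to0$ in the sense of distributions in time. Likewise, $b=\delta+\tau c^2$ and $\tau c^2\Delta\psi^\tau_t\to0$ since $\Delta\psi^\tau_t$ is bounded in $L^\infty(0,T;L^2)$, so the diffusion passes to the limit $-\delta\Delta\bar{\psi}_t$. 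Testing \eqref{ibvp_nonlinear} against $\phi\in C_c^\infty((0,T);H^1(\Omega))$ and using Green's formula on the Laplacian terms, all remaining linear terms converge by the established weak/weak-$\star$ convergences, and the boundary contribution $(c^2g+bg_t,\phi)_{L^2(\Gamma)}$ converges to $(c^2g+\delta g_t,\phi)_{L^2(\Gamma)}$, which is precisely the weak form of the Neumann condition $\partial\bar{\psi}/\partial n=g$ for \eqref{Westervelt}.

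Collecting the limits, $\bar{\psi}\in\bar{X}^W$ satisfies $(1-2k\bar{\psi}_t)\bar{\psi}_{tt}-c^2\Delta\bar{\psi}-\delta\Delta\bar{\psi}_t=0$ weakly, with homogeneous initial data and Neumann datum $g$, i.e.\ it is a solution of \eqref{Westervelt}. Finally, to upgrade subsequential convergence to convergence of the whole family, I would invoke uniqueness of the Westervelt solution in this class (cf.~\cite{KL09Westervelt}): every subsequence of $(\psi^\tau)_{\tau\in(0,\bar{\tau})}$ then admits a further subsequence converging to the same limit $\bar{\psi}$, which forces the entire family to converge weakly-$\star$ in $\bar{X}^W$ to $\bar{\psi}$. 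I expect the compactness step securing strong convergence of $\psi^\tau_t$ to be the crux, as it is what legitimizes taking the limit inside the nonlinearity; the vanishing of the $\tau$-weighted terms, while essential, follows directly from the $\sqrt{\tau}$-type factors in the uniform bounds.
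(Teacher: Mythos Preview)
Your proposal is correct and follows essentially the same route as the paper: uniform $\tau$-independent energy bounds from Theorem~\ref{thm:JMGT}, weak-$\star$ compactness in $\bar{X}^W$, Aubin--Lions--type strong convergence of $\psi^\tau_t$ to handle the nonlinearity, vanishing of the $\tau$-weighted terms, and a subsequence--subsequence argument via uniqueness of the Westervelt solution. The only cosmetic differences are that the paper states the strong convergence in $L^\infty(0,T;L^4(\Omega))$ rather than $C([0,T];L^\infty(\Omega))$, treats the nonlinearity by first integrating $k(\psi_t^2)_t$ by parts in time against $v\in C_0^\infty(0,T;C_0^\infty(\Omega))$ and then using the factorization $\bar{\psi}_t^2-(\psi^\ell_t)^2=(\bar{\psi}_t+\psi^\ell_t)(\bar{\psi}_t-\psi^\ell_t)$, and recovers the Neumann condition via weak-$\star$ convergence of the normal trace rather than through the boundary term in Green's formula; none of this changes the substance of the argument.
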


\begin{proof}
The proof is similar to the one of \cite[Theorem 7.1]{kaltenbacher2019jordan}, but based on different energy estimates.

Uniform boundedness of $\|\psi^\tau\|_{\bar{X}^W}$ according to Theorem~\ref{thm:JMGT} implies existence of a sequence $\tau_\ell\searrow0$, and an element $\bar{\psi}\in \bar{X}^W$ such that $\psi^\ell:=\psi^{\tau_\ell}$ satisfies
\begin{equation*} 
\begin{alignedat}{4} 
\psi_{tt}^\ell  &\relbar\joinrel\rightharpoonup \bar{\psi}_{tt} &&\text{ weakly}  &&\text{ in } &&L^2(0,T; (H^1(\Omega))),  \\
\psi_t^\ell  &\relbar\joinrel\rightharpoonup \bar{\psi}_{tt} &&\text{ weakly-$\star$}  &&\text{ in } &&L^\infty(0,T;H^2(\Omega)),  \\
\psi_t^\ell &\relbar\joinrel\to \bar{\psi}_t &&\text{ strongly } &&\text{ in } &&L^\infty(0,T; L^4(\Omega)),\\
\frac{\partial \psi^\ell}{\partial n} \Bigl \vert _{\Gamma}  & \relbar\joinrel\rightharpoonup \frac{\partial \bar{\psi}}{\partial n} \Bigl \vert _{\Gamma} &&\text{ weakly-$\star$}  &&\text{ in } &&L^2(0,T; (H^{-1/2}(\Gamma)))\cap L^\infty(0,T;H^{1/2}(\Gamma)).
\end{alignedat} 
\end{equation*}
	Therewith, $\frac{\partial \bar{\psi}}{\partial n} \vert _{\Gamma}=g$, and using the fact that $\psi^\ell$ solves \eqref{WesterveltMC}, we get, for $\hat{\psi}_\ell:=\bar{\psi}-\psi^\ell$ and any $v\in C_0^\infty(0,T;C_0^\infty(\Omega))$
	\[
	\begin{aligned}
	&\int_0^T\int_\Omega \left(\bar{\psi}_{tt}-c^2\Delta \bar{\psi} - \delta\Delta \bar{\psi}_t- k(\bar{\psi}_t^2)_t\right)\, v \, \textup{d}x \textup{d}t\\
	=& \,\int_0^T\int_\Omega \left(\hat{\psi}_{\ell\,tt}-c^2\Delta \hat{\psi}_\ell - \delta\Delta \hat{\psi}_{\ell\,t}
	-\tau_\ell\psi^\ell_{ttt} -\tau_\ell c^2\Delta \psi_{\ell\,t}\right)\, v \, \textup{d}x \textup{d}t\\
    &+k\,\int_0^T\int_\Omega \left(\bar{\psi}_t+\psi^\ell_t \right)\hat{\psi}_{\ell\,t} v_t\, \textup{d}x \textup{d}t\\[1mm]
    &\to0 \quad \mbox{ as } \ \ell\to\infty,
	\end{aligned}
	\]
due to the above limits and uniform boundedness of $\psi^\ell$ in $\bar{X}^W$.

A subsequence-subsequence argument, together with uniqueness of the solution to \eqref{Westervelt} according to results in, e.g., \cite{KL09Westervelt,MW11} yields convergence of the whole family $(\psi^\tau)_{\tau\in(0,\bar{\tau})}$.
\end{proof}

\section{Absorbing boundary conditions}\label{sec:ABC}
In this section, we consider extension of our results to the problem with absorbing boundary conditions
	\begin{equation} \label{ibvp_nonlinear_ABC}
	\begin{aligned}
	\begin{cases}
	\tau\psi_{ttt}+(1-2k\psi_t)\psi_{tt}-c^2\Delta \psi - b\Delta \psi_t = 0 \quad \mbox{ in }\Omega\times(0,T), \\[2mm]
	\dfrac{\partial \psi}{\partial n}=g \quad \mbox{ on } \Gamma\times(0,T),\\[2mm]
	\dfrac{\partial \psi}{\partial n}=-\beta\psi_t \quad \mbox{ on } \Sigma\times(0,T),\\[2mm]
	(\psi, \psi_t, \psi_{tt})=(0, 0, 0) \quad \mbox{ in }\Omega\times \{0\},
	\end{cases}
	\end{aligned}
	\end{equation}
where $
\beta>0$. We will comment on all the changes and additions that have to be made and state the corresponding mixed Neumann--absorbing boundary condition versions of the results obtained so far for pure Neumann boundary conditions.

In the proof of well-posedness of the linearized equation with $H^1$ spatial regularity, the corresponding semidiscrete initial-boundary value problem becomes
\begin{equation} \label{ibvp_semi-discrete_ABC}
\begin{aligned} 
\begin{cases}
\ \ (\tau \psi^n_{ttt}+\alpha \psi^n_{tt}, \phi)_{L^2}+(c^2 \nabla \psi^n+b \nabla \psi_t^n, \nabla \phi)_{L^2} \\[1mm]
\ \ +(c^2\beta \psi^n_t+b\beta\psi^n_{tt},\phi)_{L^2(\Sigma)}\\[1mm]
= (f, \phi)_{L^2}+(c^2 g +b g_t, \phi)_{L^2(\Gamma)}, \\[1mm]
\text{for every $\phi \in V_n$ pointwise a.e. in $(0,T)$}, \\[1mm]
(\psi^n(0), \psi_t^n(0), \psi^n_{tt}(0))=(0, 0, 0),
\end{cases}
\end{aligned}
\end{equation}
where the choice of the basis functions $w_i$ is again determined by \eqref{eigenf_Laplacian}; i.e., with homogeneous Neumann conditions only on part of the boundary and no conditions on the rest. This still allows for an orthonormal basis of $L^2(\Omega)$, which is a -- not necessarily orthogonal, but this is not needed -- basis of $H^1(\Omega)$ such that their Dirichlet traces $\mbox{tr}_\Sigma w_i$ form a basis of $L^2(\Sigma)$; cf. \cite{Nikolic15}. 

As a consequence of the fact that absorbing boundary conditions extract energy through the boundary in order to avoid spurious reflections, we get additional energy terms on the left hand side of the energy estimates. More precisely, the terms 
\[
+ c^2\beta\int_0^t |\mbox{tr}_\Sigma\psi^n_{tt}|_{L^2(\Sigma)}^2 \, \textup{d}s
+ b\beta|\mbox{tr}_\Sigma\psi^n_t(t)|_{L^2(\Sigma)}^2
\]
arise in \eqref{West_first_est_discrete}, \eqref{West_2nd_estimate_discrete}, \eqref{West_first_est_discrete_alphapos},
\[
+ c^2\beta\|\mbox{tr}_\Sigma\psi^n_{tt}\|_{L^2 L^2(\Sigma)}^2
+ b\beta\|\mbox{tr}_\Sigma\psi^n_t\|_{L^\infty L^2(\Sigma)}^2
\]
in \eqref{discrete_est_0}, \eqref{discrete_est_1}, and
\[
+ c^2\beta\|\mbox{tr}_\Sigma\psi_{tt}\|_{L^2 L^2(\Sigma)}^2
+ b\beta\|\mbox{tr}_\Sigma\psi_t\|_{L^\infty L^2(\Sigma)}^2
\]
in \eqref{energy_est_lin}, \eqref{energy_est_lin_tau_independent}, \eqref{energy_est_lin_higher}, \eqref{energy_est_nl_relaxed}, \eqref{energy_est_nl_higher}, while the higher order energy identity \eqref{enid1} remains unchanged.\\
\indent Therewith, Theorems~\ref{th:wellposedness_lin_lower_tau}, \ref{th:wellposedness_lin_lower}, and \ref{thm:relaxed_JMGT}
 immediately carry over as follows. 
\begin{theorem} \label{th:wellposedness_lin_lower_tau_ABC}
	Let $c^2$, $b$, $\beta$, $\tau>0$, and let $T>0$. Assume that
	\begin{itemize}
		\item $\alpha \in  L^\infty(0,T; L^\infty(\Omega))$, \smallskip
		\item $f\in L^2(0,T; L^2(\Omega))$, \smallskip
		\item $g \in H^2(0,T; H^{-1/2}(\Gamma))$, 
$\ (g, g_t)\vert_{t=0}=(0, 0)$.
	\end{itemize}
	Then there exists a unique weak solution $\psi$ of the problem 
	\begin{equation}\label{ibvp_linear_ABC}
	\begin{aligned}
	\begin{cases}
	\tau\psi_{ttt}+\alpha(x,t)\psi_{tt}-c^2\Delta \psi - b\Delta \psi_t = f(x,t) \quad \mbox{ in }\Omega\times(0,T), \\[2mm]
	\dfrac{\partial \psi}{\partial n}=g \quad \mbox{ on } \Gamma\times(0,T),\\[2mm]
	\dfrac{\partial \psi}{\partial n}=-\beta\psi_t \quad \mbox{ on } \Sigma\times(0,T),\\[2mm]
	(\psi, \psi_t, \psi_{tt})=(0, 0, 0) \quad \mbox{ in }\Omega\times \{0\},
	\end{cases}
	\end{aligned}
	\end{equation}
	in the weak $(H^{1})^{\star}$ sense that satisfies 
	\begin{equation*}
	\begin{aligned}
	\psi \in \, W^{1, \infty}(0;T;H^1(\Omega)) \cap W^{2, \infty}(0,T; L^2(\Omega))\cap H^3(0,T;H^1(\Omega)^\star).
	\end{aligned}
	\end{equation*}
Furthermore, the solution fullfils the estimate
	\begin{equation*}
	\begin{aligned}
& \begin{multlined}[t] \tau^2 \|\psi_{ttt}\|_{L^2 (H^1)^\star}^2+\tau \|\psi_{tt}\|^2_{L^\infty L^2}+\|\psi_t\|^2_{L^\infty H^1}\\[1mm]
+ c^2\beta\|\textup{tr}\,\psi_{tt}\|_{L^2 L^2(\Sigma)}^2+ b\beta\|\textup{tr}\,\psi_t\|_{L^\infty L^2(\Sigma)}^2 \end{multlined} \\[1mm]
\leq&\, C(\alpha, \tau, T)\left(\|g\|^2_{W^{1, \infty}H^{-1/2 }}+\|g_{t}\|^2_{H^1 H^{-1/2}}+\|f\|^2_{L^2 L^2}\right). 
	\end{aligned}
	\end{equation*}
The constant above is given by
\begin{equation*}
\begin{aligned} 
 & C(\alpha, \tau, T)\\
 =&\, C_1 \left(\tfrac{1}{\tau^2}\|\alpha\|^2_{L^\infty L^\infty}+T^2+1 \right)\textup{exp}(C_2 (\tfrac{1}{\tau}+\tfrac{1}{\tau}\|\alpha\|_{L^\infty L^\infty}+1+T)T)
(1+\tau),
\end{aligned}
\end{equation*}
where $C_1$, $C_2>0$ do not depend on $\tau, T$, or $\alpha$. 
\end{theorem}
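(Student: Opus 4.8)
The plan is to rerun the Galerkin proof of Theorem~\ref{th:wellposedness_lin_lower_tau} almost verbatim, the sole structural change being the additional $\Sigma$-boundary integral in the semidiscrete problem~\eqref{ibvp_semi-discrete_ABC}. I would keep the basis $\{w_i\}$ from~\eqref{eigenf_Laplacian}, now read with the homogeneous Neumann condition imposed only on $\Gamma$ and the natural condition on $\Sigma$. The one point requiring justification is that the Dirichlet traces $\mbox{tr}_\Sigma w_i$ must span $L^2(\Sigma)$, so that testing against $V_n$ genuinely encodes the absorbing condition at the discrete level; this basis property is exactly the one established in~\cite{Nikolic15}, which I would invoke rather than reprove. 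With it, the semidiscrete system is~\eqref{ODE_system} augmented by a boundary mass matrix $B^n_{ij}=\beta(\mbox{tr}_\Sigma w_i,\mbox{tr}_\Sigma w_j)_{L^2(\Sigma)}$ acting on $\xi^n_t$ and $\xi^n_{tt}$. As these matrices multiply only lower-order time derivatives, the coefficient of $\xi^n_{ttt}$ remains $\tau I^n$; the system is solvable for the top derivative and existence of $\xi^n\in H^3(0,T_n)$ follows from the same ODE theory as before~\cite{Roubicek}.

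For the energy estimate I would again add $(\psi^n_t,\phi)$ to both sides, test with $\phi=\psi^n_{tt}$, and integrate over $(0,t)$. The only genuinely new contribution is the $\Sigma$-integral $(c^2\beta\psi^n_t+b\beta\psi^n_{tt},\psi^n_{tt})_{L^2(\Sigma)}$, which after time integration and use of $\psi^n_t(0)=0$ splits into a pointwise-in-time trace term in $\mbox{tr}_\Sigma\psi^n_t$ and a time-integrated trace term in $\mbox{tr}_\Sigma\psi^n_{tt}$, each carrying a positive multiple of $\beta$. The decisive observation is that, because $\beta>0$, \emph{both} are nonnegative, so they may be retained on the left-hand side as extra dissipative energy—contributing precisely the terms $c^2\beta\|\mbox{tr}_\Sigma\psi^n_{tt}\|^2_{L^2_tL^2(\Sigma)}$ and $b\beta|\mbox{tr}_\Sigma\psi^n_t(t)|^2_{L^2(\Sigma)}$ recorded in the statement—while producing nothing new on the right. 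Hence~\eqref{West_first_est_discrete} and~\eqref{West_2nd_estimate_discrete} acquire only these additional left-hand terms, and the rest of the argument (Young's $\varepsilon$-inequality, the bound $|\nabla\psi^n(t)|_{L^2}\le\sqrt T\|\nabla\psi^n_t\|_{L^2L^2}$, and Gronwall's inequality) is unchanged; in particular the right-hand side and the constant $C(\alpha,\tau,T)$ coincide with those of Theorem~\ref{th:wellposedness_lin_lower_tau}.

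The $(H^1)^\star$ bound on $\tau\psi^n_{ttt}$ is recovered as before by reading the equation against $\xi\in L^2(0,T;H^1(\Omega))$; the rearrangement now carries the extra pairing $(c^2\beta\psi^n_t+b\beta\psi^n_{tt},\xi)_{L^2(\Sigma)}$, which I would bound by the trace theorem through $C_{tr}(c^2\beta|\mbox{tr}_\Sigma\psi^n_t|_{L^2(\Sigma)}+b\beta|\mbox{tr}_\Sigma\psi^n_{tt}|_{L^2(\Sigma)})\|\xi\|_{H^1}$. Since these trace norms are exactly the quantities already controlled by the new left-hand energy terms, the $\tau$-weighted third-derivative estimate survives with the same structure. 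I would then extract weakly/weakly-$\star$ convergent subsequences as in Theorem~\ref{th:wellposedness_lin_lower_tau}, and additionally use the uniform bounds on $\mbox{tr}_\Sigma\psi^n_{tt}$ in $L^2L^2(\Sigma)$ and on $\mbox{tr}_\Sigma\psi^n_t$ in $L^\infty L^2(\Sigma)$ to pass to the limit in the boundary integral, thereby identifying the absorbing condition $\partial_n\psi=-\beta\psi_t$ on $\Sigma$ in the limit.

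The main—indeed essentially the only—obstacle is the interplay between the basis and the boundary term. One must know in advance that $\{\mbox{tr}_\Sigma w_i\}$ is dense enough in $L^2(\Sigma)$ for the Galerkin equations to enforce the absorbing condition, and one must check the sign of the induced boundary contribution. This sign is what makes the whole scheme work: for $\beta<0$ the boundary term would feed the right-hand side and ruin the estimate, whereas for $\beta>0$ it is purely dissipative and, if anything, sharpens it. Uniqueness then follows exactly as in the pure-Neumann case by testing the homogeneous problem, the only addition being these nonnegative $\Sigma$-terms, which can only help.
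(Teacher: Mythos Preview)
Your proposal is correct and follows exactly the paper's approach: the paper handles Theorem~\ref{th:wellposedness_lin_lower_tau_ABC} by listing the modifications to the proof of Theorem~\ref{th:wellposedness_lin_lower_tau}---the basis from~\cite{Nikolic15} whose traces span $L^2(\Sigma)$, and the observation that the new $\Sigma$-terms are nonnegative and augment only the left-hand sides of~\eqref{West_first_est_discrete}, \eqref{West_2nd_estimate_discrete}, \eqref{discrete_est_0}, \eqref{discrete_est_1}---and then states that the result ``immediately carr[ies] over.'' Your writeup is in fact more explicit than the paper's, spelling out the ODE solvability with the added boundary mass matrix, the $(H^1)^\star$ duality bound with the extra trace pairing, and the limit passage in the $\Sigma$-integral.
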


\begin{theorem} \label{th:wellposedness_lin_lower_ABC} Let the assumption of Theorem~\ref{th:wellposedness_lin_lower} hold and assume additionally that 
for some fixed $\overline{\tau}>0$, $\tau\in(0,\overline{\tau}]$,
\begin{align} 
\exists \ul{\alpha}>0: \,\alpha(t)\geq\ul{\alpha}\ \text{  a.e. in } \Omega \times (0,T).  
\end{align}
Then the solution of \eqref{ibvp_linear_ABC} satisfies the estimate 
	\begin{equation*}
	\begin{aligned}
&\begin{multlined}[t] \tau^2 \|\psi_{ttt}\|_{L^2 (H^1)^\star}^2+\tau \|\psi_{tt}\|^2_{L^\infty L^2}+\|\psi_{tt}\|^2_{L^2L^2}+\|\psi_t\|^2_{L^\infty H^1}\\[1mm]
+ c^2\beta\|\textup{tr}\,\psi_{tt}\|_{L^2 L^2(\Sigma)}^2+ b\beta\|\textup{tr}\,\psi_t\|_{L^\infty L^2(\Sigma)}^2 \end{multlined} \\[1mm]
\leq&\, C(\alpha, \tau, T)\left(\|g\|^2_{W^{1, \infty}H^{-1/2 }}+\|g_{t}\|^2_{H^1 H^{-1/2}}+\|f\|^2_{L^2 L^2} \right), 
	\end{aligned}
	\end{equation*}
where the constant is given by
\begin{equation*}
\begin{aligned}
C(\alpha, \tau, T)=C_3 \,(1+T^2) \textup{exp}(C_4 (1+T)T)
(1+\overline{\tau}),
\end{aligned}
\end{equation*}
and $C_3$, $C_4>0$ do not depend on $\tau, T$, or $\alpha$.
\end{theorem}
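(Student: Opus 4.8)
The existence and uniqueness of the weak solution are already guaranteed by Theorem~\ref{th:wellposedness_lin_lower_tau_ABC}, so the task here is solely to upgrade the a priori bound to the sharper, $\tau$-uniform estimate under the added non-degeneracy assumption \eqref{non-degeneracy_condition}. The plan is therefore to re-run the discrete energy argument of Theorem~\ref{th:wellposedness_lin_lower}, now for the mixed Neumann--absorbing semidiscrete problem \eqref{ibvp_semi-discrete_ABC}. I would set up the Galerkin scheme with the basis $\{w_i\}$ of Laplace eigenfunctions carrying the homogeneous Neumann condition on $\Gamma$ and no condition on $\Sigma$; as recalled above (cf.~\cite{Nikolic15}), these still form an orthonormal basis of $L^2(\Omega)$ and a basis of $H^1(\Omega)$ whose $\Sigma$-traces span $L^2(\Sigma)$, so the semidiscrete system is again a solvable system of ODEs on a maximal interval $(0,T_n)$.

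The core step is to test \eqref{ibvp_semi-discrete_ABC} with $\phi=\psi^n_{tt}$ and integrate over $(0,t)$. Relative to the pure Neumann case, the bilinear form now contributes the extra surface term $(c^2\beta\psi^n_t+b\beta\psi^n_{tt},\psi^n_{tt})_{L^2(\Sigma)}$. After integration in time, using $\psi^n_t(0)=0$ so that its trace vanishes initially, this splits into two surface contributions: a pointwise-in-time term controlling $\textup{tr}_\Sigma\psi^n_t$ in $L^\infty L^2(\Sigma)$ and a time-integrated term controlling $\textup{tr}_\Sigma\psi^n_{tt}$ in $L^2L^2(\Sigma)$. Since $\beta>0$, both are nonnegative and are simply retained on the left-hand side, strengthening rather than complicating the energy. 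Crucially, no term involving $\Sigma$ appears on the right-hand side, so every right-hand side estimate is word-for-word that of Theorem~\ref{th:wellposedness_lin_lower}; in particular the sign condition \eqref{non-degeneracy_condition} still supplies the good term $\ul{\alpha}\|\psi^n_{tt}\|^2_{L^2L^2}$ that makes the bound $\tau$-uniform. This yields the absorbing-boundary analogue of \eqref{West_first_est_discrete_alphapos}.

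From there the remaining steps are routine. I would apply Gronwall's inequality to the resulting differential inequality, take the supremum over $t\in(0,T_n)$ to extend the interval to $(0,T)$, and derive the companion bound on $\tau\|\psi^n_{ttt}\|_{L^2(H^1)^\star}$ exactly as in the Neumann case. Because $b=\delta+\tau c^2$ satisfies $\delta\le b\le \delta+\overline{\tau}c^2$ by \eqref{b}, neither the volume nor the new surface coefficients introduce any problematic $\tau$-dependence, so the constant keeps the $\tau$-independent form $C_3(1+T^2)\exp(C_4(1+T)T)(1+\overline{\tau})$. The uniform-in-$n$ bounds then produce weakly-$\star$ convergent subsequences, and passing to the limit recovers the solution of \eqref{ibvp_linear_ABC}; the two surface norms survive the limit by weak lower semicontinuity.

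I expect no serious new obstacle in the estimate itself, which is genuinely a one-line modification of Theorem~\ref{th:wellposedness_lin_lower}: the absorbing terms are ``free'' dissipative contributions that only improve the left-hand side. The two points requiring care are structural rather than analytical. First, one must confirm the dissipative sign of the surface terms and that the $\tau$-dependence hidden in the coefficient $b$ stays harmless, as noted above. Second, and more to the point, one must justify at the limit stage that the weak limit actually satisfies the absorbing condition $\partial\psi/\partial n=-\beta\psi_t$ on $\Sigma$; this is exactly why the basis is chosen with no imposed condition on $\Sigma$, and the weak convergence of the traces $\textup{tr}_\Sigma\psi^n_t,\textup{tr}_\Sigma\psi^n_{tt}$ afforded by the uniform surface bounds is what allows one to identify the boundary term in the limiting variational identity.
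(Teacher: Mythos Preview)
Your proposal is correct and follows exactly the approach the paper indicates: the paper does not give a separate proof but states that Theorems~\ref{th:wellposedness_lin_lower_tau} and~\ref{th:wellposedness_lin_lower} ``immediately carry over'' once one notes that testing the semidiscrete problem \eqref{ibvp_semi-discrete_ABC} with $\psi^n_{tt}$ produces the additional nonnegative surface terms $c^2\beta\int_0^t|\mbox{tr}_\Sigma\psi^n_{tt}|_{L^2(\Sigma)}^2\,\textup{d}s + b\beta|\mbox{tr}_\Sigma\psi^n_t(t)|_{L^2(\Sigma)}^2$ on the left-hand side of \eqref{West_first_est_discrete_alphapos}. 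Your write-up is in fact more explicit than the paper's, correctly identifying the dissipative sign, the harmless $\tau$-dependence through $b$, and the role of the basis choice for recovering the absorbing condition in the limit.
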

\begin{theorem}\label{thm:relaxed_JMGT_ABC}
	Let $c^2$, $b$, $\beta$, $\tau>0$, $k\in\mathbb{R}$, and let $T>0$. Assume that the function $h \in C^0(\mathbb{R})$ satisfies \eqref{conditions_h}, and that $g \in H^2(0,T; H^{-1/2}(\Gamma))$ with $(g, g_t)\vert_{t=0}=(0,0)$. 
 Moreover, let
	\begin{equation*}
	\begin{aligned}
	\|g\|^2_{W^{1, \infty}H^{-1/2 }}+\|g_{t}\|^2_{H^1 H^{-1/2}} \leq \varrho.
	\end{aligned}
	\end{equation*}
Then for sufficiently small $\varrho$, there exists a solution $\psi$ of the problem
	\begin{equation} 
	\begin{aligned}
	\begin{cases}
	\tau\psi_{ttt}+h(\psi_t)\psi_{tt}-c^2\Delta \psi - b\Delta \psi_t = 0 \quad \mbox{ in }\Omega\times(0,T), \\[2mm]
	\dfrac{\partial \psi}{\partial n}=g \quad \mbox{ on } \Gamma\times(0,T),\\[2mm]
	\dfrac{\partial \psi}{\partial n}=-\beta\psi_t \quad \mbox{ on } \Sigma\times(0,T),\\[2mm]
	(\psi, \psi_t, \psi_{tt})=(0, 0, 0) \quad \mbox{ in }\Omega\times \{0\},
	\end{cases}
	\end{aligned}
	\end{equation}
	in the weak $(H^{1})^{\star}$ sense that satisfies 
	\begin{equation*}
	\begin{aligned}
	\psi \in \,  W^{1, \infty}(0;T;H^1(\Omega)) \cap W^{2, \infty}(0,T; L^2(\Omega)) \cap H^3(0,T;H^1(\Omega)^\star),
	\end{aligned}
	\end{equation*}
and the estimate
	\begin{equation*}
	\begin{aligned}
& \begin{multlined}[t] \tau^2 \|\psi_{ttt}\|_{L^2 (H^1)^\star}^2+\tau \|\psi_{tt}\|^2_{L^\infty L^2}+\|\psi_t\|^2_{L^\infty H^1}\\[1mm]
+ c^2\beta\|\textup{tr}\,\psi_{tt}\|_{L^2 L^2(\Sigma)}^2+ b\beta\|\textup{tr}\,\psi_t\|_{L^\infty L^2(\Sigma)}^2 \end{multlined} \\[1mm]
\leq&\, C(\tau, T)\left(\|g\|^2_{W^{1, \infty}H^{-1/2 }}+\|g_{t}\|^2_{H^1 H^{-1/2}}\right).
	\end{aligned}
	\end{equation*}
\end{theorem}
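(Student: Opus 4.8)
The plan is to mirror the proof of Theorem~\ref{thm:relaxed_JMGT} verbatim, merely substituting the pure-Neumann linear well-posedness result by its absorbing-boundary counterpart and carrying along the two extra dissipative surface terms. Concretely, I would introduce the fixed-point map $\mathcal{F}:v\mapsto\psi$ on the ball
\[
\mathcal{B}=\{v\in X:\ \tau^2\|v_{ttt}\|_{L^2(H^1)^\star}^2+\tau\|v_{tt}\|^2_{L^\infty L^2}+\|v_t\|^2_{L^\infty H^1}\leq M,\ (v,v_t,v_{ttt})\vert_{t=0}=(0,0,0)\},
\]
where, for $v\in\mathcal{B}$, the image $\psi$ is the weak solution of the linearized mixed problem \eqref{ibvp_linear_ABC} with coefficient $\alpha=h(v)$ and right-hand side $f=0$. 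Well-definedness of $\mathcal{F}$ follows from Theorem~\ref{th:wellposedness_lin_lower_tau_ABC}, since $h(v)\in L^\infty(0,T;L^\infty(\Omega))$ by \eqref{conditions_h}.

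Next I would establish the self-mapping property. Applying the energy estimate of Theorem~\ref{th:wellposedness_lin_lower_tau_ABC} with $f=0$ bounds the full left-hand side — including the additional terms $c^2\beta\|\textup{tr}\,\psi_{tt}\|_{L^2L^2(\Sigma)}^2+b\beta\|\textup{tr}\,\psi_t\|_{L^\infty L^2(\Sigma)}^2$ — by $C(\tau,T)(\|g\|^2_{W^{1,\infty}H^{-1/2}}+\|g_t\|^2_{H^1H^{-1/2}})\leq C(\tau,T)\varrho$. Choosing $\varrho$ small enough that $C(\tau,T)\varrho\leq M$ yields $\mathcal{F}(\mathcal{B})\subset\mathcal{B}$; the surface terms only reinforce this bound and hence pose no obstruction.

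For weak-$\star$ continuity I would take $v^n\rightharpoonup^\star v$ in $X$, set $\psi^n=\mathcal{F}(v^n)$, and extract from the uniform bound of Theorem~\ref{th:wellposedness_lin_lower_tau_ABC} a subsequence converging weakly-$\star$ to some $\varphi$ in the $X$-topologies. The extra boundary terms in the ABC energy estimate additionally provide the weak limits $\textup{tr}\,\psi^n_{tt}\rightharpoonup\textup{tr}\,\varphi_{tt}$ in $L^2L^2(\Sigma)$ and $\textup{tr}\,\psi^n_t\rightharpoonup^\star\textup{tr}\,\varphi_t$ in $L^\infty L^2(\Sigma)$. Since $h$ is continuous, $h(v^n)\to h(v)$ a.e.\ in $\Omega\times(0,T)$, so passing to the limit in the semidiscrete weak form \eqref{ibvp_semi-discrete_ABC} shows that $\varphi$ solves the linearized ABC problem with coefficient $h(v)$; by uniqueness $\varphi=\mathcal{F}(v)$, and a subsequence-subsequence argument upgrades this to weak-$\star$ convergence of the whole sequence. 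Schauder's fixed-point theorem then applies on the nonempty, convex, weakly-$\star$ compact set $\mathcal{B}$, delivering a fixed point, i.e.\ a solution.

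The only point requiring genuine care is the passage to the limit in the absorbing boundary pairing during the continuity step. However, because the condition $\partial\psi/\partial n=-\beta\psi_t$ on $\Sigma$ is \emph{linear} in $\psi_t$ and enters the weak formulation solely through the bilinear $L^2(\Sigma)$-term $(c^2\beta\psi_t^n+b\beta\psi^n_{tt},\phi)_{L^2(\Sigma)}$, the trace bounds furnished by the augmented energy estimate make this a routine weak-convergence argument rather than a substantive difficulty. Thus the proof reduces, as asserted in the text preceding the statement, to bookkeeping the two additional dissipative surface integrals throughout the chain of estimates inherited from Theorem~\ref{thm:relaxed_JMGT}.
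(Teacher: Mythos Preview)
Your proposal is correct and matches the paper's approach exactly: the paper does not give a separate proof for this theorem but simply states that Theorems~\ref{th:wellposedness_lin_lower_tau}, \ref{th:wellposedness_lin_lower}, and \ref{thm:relaxed_JMGT} ``immediately carry over'' once the additional dissipative boundary terms on $\Sigma$ are tracked through the estimates, which is precisely what you have spelled out. The one minor slip is that in the weak-$\star$ continuity step you should pass to the limit in the continuous weak formulation of \eqref{ibvp_linear_ABC}, not the semidiscrete form \eqref{ibvp_semi-discrete_ABC}; otherwise the argument is sound.
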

~\\[4mm]
\noindent \textbf{Extension of the boundary data.} To extend the higher-regularity results of Theorems~\ref{th:wellposedness_lin_higher}, \ref{thm:JMGT}, and \ref{th:limits}, 
we impose the additional compatibility condition $g\vert_{\partial\Gamma}=0$ on the interface between the two boundary parts $\Gamma$ and $\Sigma$; in other words, we assume that $g\in H_0^{1/2}(\Gamma)$. We redefine the extension operator $N$ as $Nh=v$, where $v$ solves
\[
\begin{aligned}
-\Delta v+v&=0 \quad \text{ in } \Omega, \\
\hspace*{1cm}\dfrac{\partial v}{\partial n}&=\tilde{h}=\begin{cases} h \quad \text{ on } \Gamma\\ 0 \quad \text{ on } \partial\Omega\setminus\Gamma\,,
\end{cases}
\end{aligned}
\]
with $\tilde{h}\in H^s(\partial\Omega)$ for all $s\in[0,\tfrac12)$, provided that $h\in H_0^{1/2}(\Gamma)$; cf. \cite[Corollary 1.4.4.5.]{Grisvard}. Thus we still have boundedness of $N$ as an operator $H^{-1/2}(\Gamma)\to H^1(\Omega)$ and as $H^{1/2}(\Gamma)\to H^{3/2+s}(\Omega)$ for $s\in[0,\tfrac12)$.\\
\indent On the other hand, we will also need an $L^2(\Omega)$ estimate on $Ng_{tttt}$. Therefore, we define $v=Nh$ by duality for $h\in H^{-3/2}$, i.e., 
\[
\begin{aligned} 
(\nabla v , -\Delta \phi +\phi) = \langle h,\phi\rangle_{H^{-3/2}(\Gamma),H^{3/2}(\Gamma)},
\end{aligned}
\]
for every $\phi \in H^2(\Omega)$, $\frac{\partial\phi}{\partial n}\vert_{\partial\Omega}=0$, which yields boundedness of $N:H^{-3/2}(\Gamma)\to L^2(\Omega)$.
\\
\indent A crucial point in the analysis is the fact that in a mixed Neumann--absorbing boundary condition setting, we cannot conclude anymore $H^2$ regularity in space of some function $v$ from $L^2$ boundedness of $-\Delta v+v$.
Nevertheless, we can achieve sufficient regularity of $\psi_t$ to obtain an embedding into $L^\infty(\Omega)$, as required for guaranteeing non-degeneracy, along the lines of the proof of \cite[Theorem 1]{shapederiv}. 
\begin{theorem} \label{th:wellposedness_lin_higher_ABC}
	Let $c^2$, $b>0$, $\tau \in (0, \overline{\tau})$, and let $T>0$. Assume that
	\begin{itemize}
		\item $\alpha \in X_\alpha^W:=L^\infty(0,T; W^{1,3}(\Omega)\cap L^\infty(\Omega))\cap W^{1,\infty}(0,T;L^3(\Omega))$ ,\smallskip
		\item  $\ \exists \, \ul{\alpha}>0: \,\alpha(t)\geq\ul{\alpha}\ \text{  a.e. in } \Omega \times (0,T), $ \smallskip
		\item $f\in H^1(0,T; L^2(\Omega))$, \smallskip
		\item $		\|\nabla \alpha\|_{L^\infty L^3}<\underline{\alpha}/\left (6  C_{H^1,L^6} \right)$, \smallskip
  	\item 
$g \in H^4(0,T; H^{-3/2}(\Gamma))\cap H^3(0,T; H^{-1/2}(\Gamma))\cap H^2(0,T;H_0^{1/2}(\Gamma))$,
\smallskip
    \item $(g, g_t , g_{tt})\vert_{t=0}=(0, 0, 0)$.
	\end{itemize}
	Then there exists a unique weak solution $\psi$ of the problem  \eqref{ibvp_linear_ABC} that satisfies
		\begin{equation*}
	\begin{aligned}
	\psi \in \,  W^{1, \infty}(0,T; H^{3/2+s}(\Omega)) \cap W^{2, \infty}(0,T;H^1(\Omega))\cap H^3(0,T;L^2(\Omega))
	\end{aligned}
	\end{equation*}
		for any $s\in[0,\frac12)$. 	Moreover, the solution fulfills the estimate
\begin{equation*}
\begin{aligned}
& \begin{multlined}[c]\tau^2 \|\psi_{ttt}\|_{L^2L^2}+\tau\|\psi_{tt}\|^2_{L^\infty H^1}
\\
+\|\psi_{tt}\|^2_{L^2 H^1}+\|- \Delta \psi_t\|^2_{L^\infty L^2}\\ 
+ c^2\beta\|\textup{tr}\,\psi_{tt}\|_{L^2 L^2(\Sigma)}^2+ b\beta\|\textup{tr}\,\psi_t\|_{L^\infty L^2(\Sigma)}^2
\end{multlined}\\[1mm]
\leq&
\,C(\alpha, T)  \begin{multlined}[t]
\left(\|g_t\|^2_{H^1 H^{1/2}}+\|g\|^2_{H^3H^{-1/2}}+\tau^2\|g_{ttt}\|^2_{H^1 H^{-1/2}} \right.
\\\left.+\tau^2 \|g_{tttt}\|^2_{L^2 H^{-3/2}}+ \| f\|^2_{H^1 L^2}\right). \end{multlined}
\end{aligned}
\end{equation*}
The constant above is given by
\begin{equation*}
\begin{aligned}
C(\alpha, T)=&C_5 \,(\|\alpha\|^2_{L^\infty L^\infty}+\|\nabla \alpha\|^2_{L^\infty L^3}+1)\\
&\times \textup{exp}\,\left(C_6 \, \left(\|\alpha\|^2_{L^\infty L^\infty}+\|\nabla \alpha\|^2_{L^\infty L^3} +1+T+T^2 \right) T  \right)
(1+\overline{\tau}),
\end{aligned}
\end{equation*}
where $C_5$, $C_6>0$ do not depend on $n$ or $\tau$. 
\end{theorem}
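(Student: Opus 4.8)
The plan is to mirror the Galerkin-plus-compactness proof of Theorem~\ref{th:wellposedness_lin_higher} (equivalently \cite[Theorem 4.1]{kaltenbacher2019jordan}), adjusting each ingredient to the mixed Neumann--absorbing geometry. First I would remove the inhomogeneous Neumann data on $\Gamma$ by setting $\bar{\psi}=\psi-Ng$, using the \emph{redefined} extension $N$ whose flux is extended by zero across the interface onto $\Sigma$. The bulk equation for $\bar{\psi}$ then carries the familiar source $f-\tau Ng_{ttt}-\alpha Ng_{tt}+c^2\Delta Ng+b\Delta Ng_t$, while the absorbing condition on $\Sigma$ becomes $\partial_n\bar{\psi}+\beta\bar{\psi}_t=-\beta Ng_t$, i.e.\ it acquires the single new inhomogeneity $-\beta Ng_t$. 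I would discretize in $V_n=\mathrm{span}\{w_1,\dots,w_n\}$ with the eigenfunctions of \eqref{eigenf_Laplacian}, so that $-\Delta$ preserves $V_n$ and the test function $\phi=-\Delta\bar{\psi}^n_{tt}$ is admissible, and obtain $\bar{\psi}^n\in H^3(0,T;V_n)$ from the standard ODE theory.

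For the a priori bound I would again combine two test functions. Testing with $\phi=-\Delta\bar{\psi}^n_{tt}$ yields the higher-order identity \eqref{enid1}, which, as recorded in the text, is structurally unchanged in the present setting, now carrying the extra extension terms built from $Ng_{ttt}$, $Ng_{tt}$, $Ng$ and $Ng_t$ and estimated via the mapping properties of $N$; testing with $\phi=\bar{\psi}^n_{tt}$ gives the lower-order estimate \eqref{West_first_est_discrete_alphapos1}, which in the mixed setting additionally deposits the dissipative boundary energy $c^2\beta\|\mathrm{tr}_\Sigma\bar{\psi}^n_{tt}\|^2_{L^2 L^2(\Sigma)}+b\beta\|\mathrm{tr}_\Sigma\bar{\psi}^n_t\|^2_{L^\infty L^2(\Sigma)}$ on the left. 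The genuinely new estimate here is the $Ng_{tttt}$-contribution: because this quantity can only be bounded in $L^2(\Omega)$, I would invoke the duality-defined extension $N:H^{-3/2}(\Gamma)\to L^2(\Omega)$, which is precisely why the hypotheses demand $g\in H^4(0,T;H^{-3/2}(\Gamma))$ and the right-hand side of the estimate displays $\tau^2\|g_{tttt}\|^2_{L^2 H^{-3/2}}$; the reinforced assumption $\alpha\in W^{1,\infty}(0,T;L^3(\Omega))$ is then used to absorb the time-differentiated $\alpha$-cross terms produced along the way. Choosing $\varepsilon_0=\underline{\alpha}/6$, adding the resulting estimates to the bound on $\tau^2\|\bar{\psi}^n_{ttt}\|^2_{L^2 L^2}$, applying Gronwall's inequality and taking the supremum in $t$ would produce a constant independent of $n$ and of $\tau\in(0,\overline{\tau}]$; I would then extend the interval to $(0,T)$, pass to the weak-$\star$ limit, and recover $\psi=\bar{\psi}+Ng$.

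The decisive difficulty, and the place where the argument genuinely departs from the pure-Neumann case, is the spatial regularity of $\psi_t$. In Theorem~\ref{th:wellposedness_lin_higher} the control $\|-\Delta\bar{\psi}_t\|_{L^\infty L^2}$ upgrades at once to $\bar{\psi}_t\in L^\infty H^2$ through full elliptic regularity of $-\Delta+\mathrm{id}$ under homogeneous Neumann conditions; here this step fails because the interface $\partial\Gamma=\partial\Sigma$ between the Neumann and the absorbing part precludes $H^2$ regularity. Instead I would view $\psi_t$ as the solution of the elliptic problem $-\Delta\psi_t=F$ with $F:=-\Delta\psi_t$ controlled in $L^\infty(0,T;L^2(\Omega))$ by the energy estimate and \emph{piecewise} Neumann data $\partial_n\psi_t=g_t$ on $\Gamma$ and $\partial_n\psi_t=-\beta\psi_{tt}$ on $\Sigma$. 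Each piece of this datum lies in $H^{1/2}$ — using $\psi_{tt}\in L^\infty H^1(\Omega)$ and $g_t\in L^\infty H_0^{1/2}(\Gamma)$ — but the two pieces generally jump across the interface, so the global datum belongs only to $H^{s}(\partial\Omega)$ for $s<\tfrac12$; the compatibility $g\in H_0^{1/2}(\Gamma)$ is exactly what keeps the Neumann datum from being more singular there. Mixed-boundary elliptic regularity, following \cite[Theorem 1]{shapederiv}, then gives $\psi_t\in H^{3/2+s}(\Omega)$ for every $s\in[0,\tfrac12)$. This falls half a derivative short of $H^2$, but still satisfies $H^{3/2+s}(\Omega)\hookrightarrow L^\infty(\Omega)$ for $s>0$, which is all that is required downstream to keep $1-2k\psi_t$ away from zero. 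Uniqueness is immediate: the difference of two solutions solves \eqref{ibvp_linear_ABC} with zero data, and the energy estimate forces it to vanish.
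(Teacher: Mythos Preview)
Your overall architecture is right, and your treatment of the $H^{3/2+s}$ elliptic step is sound (in fact a bit cleaner than the paper's ODE trick for $z=-\Delta\psi+\psi$, since you already have $-\Delta\psi_t\in L^\infty L^2$ from the energy). However, there is a genuine gap in the a priori estimate.

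The two tests you name, $\phi=-\Delta\bar\psi^n_{tt}$ and $\phi=\bar\psi^n_{tt}$, do \emph{not} produce any $Ng_{tttt}$ or $\alpha_t$ terms; they give exactly what they gave in Theorem~\ref{th:wellposedness_lin_higher}, namely $\tau\|\bar\psi^n_{tt}\|_{L^\infty H^1}^2$, $\|\bar\psi^n_{tt}\|_{L^2 H^1}^2$, $\|-\Delta\bar\psi^n_t\|_{L^\infty L^2}^2$, together with the dissipative $\Sigma$-terms. The quantities $Ng_{tttt}$ and $\alpha_t$ that you (correctly) flag as new can only enter once you \emph{differentiate the semi-discrete equation in time} and test the resulting identity with $\bar\psi^n_{ttt}$. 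This is the additional step the paper carries out explicitly: it forms \eqref{ibvp_abs_t}, tests with $\bar\psi^n_{ttt}$, integrates the $c^2$ term by parts, and obtains control of $\|\nabla\bar\psi^n_{tt}\|_{L^\infty L^2}$ and $\|\bar\psi^n_{ttt}\|_{L^2 L^2}$ \emph{without} a $\tau$-weight, at the price of the terms $\tau^2\|g_{tttt}\|_{L^2 H^{-3/2}}^2$, $\|g_{ttt}\|_{L^2 H^{-1/2}}^2$ and $\|\alpha_t\|_{L^\infty L^3}$ on the right. This is precisely why the hypotheses ask for $g\in H^4(0,T;H^{-3/2})$ and $\alpha\in W^{1,\infty}(0,T;L^3)$, and why those norms appear on the right-hand side of the final estimate.

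This missing estimate also feeds directly into your elliptic step: to place the Neumann datum $-\beta\psi_{tt}$ on $\Sigma$ into $L^\infty(0,T;H^{1/2}(\Sigma))$ you need $\psi_{tt}\in L^\infty(0,T;H^1(\Omega))$, and the standard tests only give you this with a $\tau^{-1/2}$ blow-up. For the qualitative regularity claim at fixed $\tau$ that suffices, but the paper wants the $W^{2,\infty}H^1$ membership and the subsequent $W^{1,\infty}H^{3/2+s}$ bound usable uniformly in $\tau$ for the later nonlinear and limit arguments; that is what the time-differentiated test delivers. So: keep everything you wrote, but insert the step ``differentiate \eqref{ibvp_abs} in $t$ and test with $\bar\psi^n_{ttt}$'' between your two tests and the Gronwall closure, and route the $Ng_{tttt}$, $Ng_{ttt}$ and $\alpha_t Ng_{tt}$, $\alpha_t\bar\psi^n_{tt}$ contributions through it.
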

\begin{proof}
We highlight here the main differences to the proof of Theorem~\ref{th:wellposedness_lin_higher}, which consist in obtaining an energy estimate that provides us with enough regularity of $\mbox{tr}_\Sigma \psi_{tt}$. We choose $\{w_i\}_{i \in \mathbb{N}}$ again as eigenfunctions of the homogeneous Neumann-Laplacian \eqref{eigenf_Laplacian}  and consider Galerkin approximations in $V_n=\textup{span}\{w_1, \ldots w_n\}$ of the 
difference $\bar{\psi}=\psi-Ng$. We thus obtain the semi-discrete problem
\begin{equation}
\begin{aligned} \label{ibvp_abs}
\begin{cases}
\ \ (\tau \bar{\psi}^n_{ttt}+\alpha \bar{\psi}^n_{tt}, \phi)_{L^2}+(c^2 \nabla \bar{\psi}^n+b \nabla \bar{\psi}_{t}^n, \nabla \phi)_{L^2} \\[0.5mm]
\ \ +(c^2\beta \bar{\psi}^n_t+b\beta\bar{\psi}^n_{tt},\phi)_{L^2(\Sigma)}\\[0.5mm]
= (\tilde{f}, \phi)_{L^2}-(c^2 \beta Ng_t+b \beta Ng_{tt}, \phi)_{L^2(\Sigma)}, \\[0.5mm]
\text{for every $\phi \in V_n$ pointwise a.e. in $(0,T)$},\\[1mm]
(\bar{\psi}^n(0), \bar{\psi}_t^n(0), \bar{\psi}^n_{tt}(0))=(0, 0, 0),
\end{cases}
\end{aligned}
\end{equation}
where $\tilde{f}=f-\tau Ng_{ttt}-\alpha Ng_{tt}+c^2 \Delta Ng+b \Delta Ng_{t}$. \\[2mm]
\noindent \textbf{Higher-order estimate.}  Due to the regularity assumptions on $g$, $f$ and $\alpha$, we can conclude that problem \eqref{ibvp_abs} has a solution $\bar{\psi}^n \in H^4(0,T; V_n)$. We are thus allowed to differentiate \eqref{ibvp_abs} with respect to time and also consider the following problem
\begin{equation}  \label{ibvp_abs_t}
\begin{aligned} 
\begin{cases}
\ \ (\tau \bar{\psi}^n_{tttt}+\alpha \bar{\psi}^n_{ttt}, \phi)_{L^2}+(c^2 \nabla \bar{\psi}^n_t+b \nabla \bar{\psi}_{tt}^n +\alpha_t\bar{\psi}^n_{tt}, \nabla \phi)_{L^2} \\[1mm]
\ \ +(c^2\beta \bar{\psi}^n_{tt}+b\beta\bar{\psi}^n_{ttt},\phi)_{L^2(\Sigma)}\\[1mm]
= (\tilde{f}_t, \phi)_{L^2}-(c^2 \beta Ng_{tt}+b \beta Ng_{ttt}, \phi)_{L^2(\Sigma)}, \\[1mm]
\text{for every $\phi \in V_n$ pointwise a.e. in $(0,T)$}, 
\end{cases}
\end{aligned}
\end{equation}
where $\tilde{f}_t=f_t-\tau Ng_{tttt}-\alpha Ng_{ttt}-\alpha_t Ng_{tt}+c^2 \Delta Ng_t+b \Delta Ng_{tt}$. Above we have used the fact that $\frac{\partial Ng}{\partial n}$ vanishes on $\Sigma$. We note first that by testing \eqref{ibvp_abs} with $-\Delta \bar{\psi}_{tt}^n$ and $\bar{\psi}_{tt}^n$, we can derive the following estimate
\begin{equation}
\begin{aligned}
& \begin{multlined}[c]\tau^2 \|\bar{\psi}^n_{ttt}\|^2_{L^2L^2}+\tau\|\bar{\psi}^n_{tt}\|^2_{L^\infty L^2}+ \tau \|\nabla \bar{\psi}^n_{tt}\|^2_{L^\infty L^2}+\|\bar{\psi}^n_{tt}\|^2_{L^2 H^1}\\[1mm]
+\|- \Delta \bar{\psi}_t^n\|^2_{L^\infty L^2}
+ \tau\beta\|\mbox{tr}_\Sigma\bar{\psi}^n_{ttt}\|_{L^2_t L^2(\Sigma)}^2+ \beta|\sqrt{\alpha}\mbox{tr}_\Sigma\psi_{tt}(t)|_{L^2(\Sigma)}^2\end{multlined}\\[1mm]
\leq&\,C(\alpha, T)  \begin{multlined}[t] \left(
\tau^2 \|g_{tttt}\|^2_{L^2 H^{-3/2}}+
\tau^2 \|g_{ttt}\|^2_{L^2 H^{-1/2}}+ \|g\|^2_{H^2 H^{-1/2}}\right. \\ \left. + \| f\|^2_{H^1 L^2} \right), \end{multlined}
\end{aligned}
\end{equation}
where the additional terms on $\Sigma$ arise due to integration by parts of the $\tau$ and $\alpha$ term with respect to space.

We then test \eqref{ibvp_abs_t} with $\psi^n_{ttt}$. After integration by parts of the $c^2$ term, which also removes the $c^2 \beta Ng_t$ term on $\Sigma$,
as well as integration by parts with respect to time of the remaining term on $\Sigma$, we obtain
\[
\begin{aligned}
&\tfrac{\tau}{2} |\bar{\psi}^n_{ttt}(t)|^2_{L^2}+\tfrac{b}{2}|\nabla \bar{\psi}^n_{tt}(t)|^2_{L^2}+\int_0^t|\sqrt{\alpha}\bar{\psi}^n_{ttt}(t)|^2_{L^2}\, \textup{d}s
+ \tfrac{b\beta}{2}|\mbox{tr}_\Sigma\bar{\psi}^n_t(t)|_{L^2(\Sigma)}^2\\[1mm]
=&\, \int_0^t(\bar{\psi}^n_{ttt},\tilde{f}_t-\alpha_t\bar{\psi}^n_{tt}-c^2\Delta\bar{\psi}^n_t)_{L^2}\,\textup{d}s
-\int_0^t (b \beta Ng_{tt}, \bar{\psi}^n_{ttt})_{L^2(\Sigma)}\, \textup{d}s
\\[1mm]
\leq&\tfrac{1}{2}\int_0^t|\sqrt{\alpha}\bar{\psi}^n_{ttt}(t)|^2_{L^2}\, \textup{d}s
+\tfrac{1}{2\underline{\alpha}}\|\tilde{f}_t-\alpha_t\bar{\psi}^n_{tt}-c^2\Delta\bar{\psi}^n_t\|^2_{L^2_t L^2}\\[1mm]
&+\tfrac{b \beta C_{tr}}{2}\left(\|\nabla\bar{\psi}^n_{tt}\|_{L^2_t L^2}^2+\|\bar{\psi}^n_{tt}\|_{L^2_t L^2}^2\right)
+\tfrac{b \beta C_{tr}}{2}\|\mbox{tr}_\Sigma Ng_{ttt}\|_{L^2 H^{-1/2}(\Sigma)}^2\\[1mm]
&+\tfrac{b}{4}\left(|\nabla\bar{\psi}^n_{tt}(t)|_{L^2}^2+|\bar{\psi}^n_{tt}(t)|_{L^2}^2\right)
+b \beta^2 C_{tr}^2|\mbox{tr}_\Sigma Ng_{tt}(t)|_{H^{-1/2}(\Sigma)}^2\,,
\end{aligned}
\]
where the terms $\|\mbox{tr}_\Sigma Ng_{ttt}\|_{L^2 H^{-1/2}(\Sigma)}$ and $|\mbox{tr}_\Sigma Ng_{tt}(t)|_{H^{-1/2}(\Sigma)}$ can be further estimated by means of the mapping properties of $N$, continuity of the embedding $L^2(\Sigma)\to H^{-1/2}(\Sigma)$, and the trace theorem. In the limit as $n\to\infty$, we arrive at the energy estimate
\begin{equation*}
\begin{aligned}
&\tau\|\bar{\psi}_{ttt}\|^2_{L^\infty L^2}+\|\nabla \bar{\psi}_{tt}\|^2_{L^\infty L^2}+\|\bar{\psi}_{ttt}\|^2_{L^2 L^2}+\|\mbox{tr}_\Sigma\bar{\psi}_t\|_{L^\infty L^2(\Sigma)}^2\\[1mm]
\leq& \, C(T) \begin{multlined}[t] \left((1+\|\alpha_t\|^2_{L^\infty L^3})(\|\bar{\psi}_{tt}\|^2_{L^2 H^1}+\|g_{tt}\|_{L^2 H^{-1/2}}^2)
+\|\Delta\bar{\psi}_t\|^2_{L^2 L^2}\right.\\[1mm] \left.
+\|\bar{\psi}_{tt}\|^2_{L^\infty_t L^2}+\|\tilde{f}_t\|^2_{L^2 L^2}+\|g_{ttt}\|_{L^2 H^{-1/2}}^2+\|g_{tt}\|_{L^\infty H^{-1/2}}^2
\right).\end{multlined}
\end{aligned}
\end{equation*}
From here we obtain the estimate for $\psi=\bar{\psi}+Ng$: 
\begin{equation} \label{est_ttt}
\begin{aligned}
&\tau\|\psi_{ttt}\|^2_{L^\infty L^2}+\|\nabla \psi_{tt}\|^2_{L^\infty L^2}+\|\psi_{ttt}\|^2_{L^2 L^2}+\|\mbox{tr}_\Sigma \psi_t\|_{L^\infty L^2(\Sigma)}^2\\[1mm]
\leq& \, C(T) \begin{multlined}[t] \left((1+\|\alpha_t\|^2_{L^\infty L^3})(\|\psi_{tt}\|^2_{L^2 H^1}+\|g_{tt}\|_{L^2 H^{-1/2}}^2) \right.
\\[1mm] 
+\|\Delta \psi_t\|^2_{L^2 L^2}+\|\psi_{tt}\|^2_{L^\infty_t L^2}+\|\tilde{f}_t\|^2_{L^2 L^2}\\[1mm] \left.
+\tau^2\|g_{tttt}\|_{L^2 H^{-3/2}}^2
+\|g_{ttt}\|_{L^2 H^{-1/2}}^2
+\|g_{tt}\|_{L^\infty H^{-1/2}}^2
\right). \end{multlined}
\end{aligned}
\end{equation}
The right-hand side can be further estimated by means of Gronwall's inequalities and the other energy estimates.\\

\noindent \textbf{\mathversion{bold}$H^{3/2+s}(\Omega)$ regularity.} Now we are ready to adopt the argument from the proof of \cite[Theorem 1]{shapederiv} as follows.
Since $z:=-\Delta\psi+\psi$ satisfies the ODE 
\[ 
z_t(t)=-\tfrac{c^2}{b} z(t) +\tfrac{1}{b}(f(t)-\tau\psi_{ttt}(t)-\alpha\psi_{tt}(t)+b\psi_t(t)+c^2\psi(t))
\]
in a pointwise almost every sense with respect to space, we can use the common variation of constants formula to write 
\[
z(t)=\tfrac{1}{b}\int_0^t e^{-\frac{c^2}{b}(t-s)} (f(s)-\tau\psi_{ttt}(s)-\alpha\psi_{tt}(s)+b\psi_t(s)+c^2\psi(s))\, \textup{d}s.
\]
Hence we have that
\[
\begin{aligned}
z_t(t)=&\, \tfrac{1}{b}(f(t)-\tau\psi_{ttt}(t)-\alpha\psi_{tt}(t)+b\psi_t(t)+c^2\psi(t))\\
&-\tfrac{c^2}{b}\int_0^t e^{-\frac{c^2}{b}(t-s)} (f(s)-\tau\psi_{ttt}(s)-\alpha\psi_{tt}(s)+b\psi_t(s)+c^2\psi(s))\, \textup{d}s\\
=:&\, \tilde{f}(t)\,,
\end{aligned}
\]
where $\tilde{f}\in L^\infty(0,T;L^2(\Omega))$, provided $f\in L^\infty(0,T;L^2(\Omega))$.
We now consider this as a pointwise in time elliptic PDE for $\tilde{\psi}:=\psi_t(t)$, equipped with the boundary conditions resulting from \eqref{ibvp_nonlinear_ABC},
\begin{equation}\label{Neumannproblemz}
\begin{aligned}
-\Delta \tilde{\psi}+\tilde{\psi}=&\, \tilde{f}(t) \mbox{ in }\Omega\\
\frac{\partial \tilde{\psi}}{\partial n}=& \, \tilde{g}(t)=\begin{cases}g_t(t) \mbox{ on }\Gamma\\-\beta \psi_{tt}(t) \mbox{ on }\Sigma\,.\end{cases}
\end{aligned}
\end{equation}
\indent We note that the Neumann data $\tilde{g}(t)$ in general is not an element of $H^{1/2}(\partial\Omega)$ even though the functions $g_t(t)$, $\psi_{tt}(t)$ exhibit $H^{1/2}$ regularity on the respective boundary parts, the latter due to the trace theorem and our energy estimate. Global $H^{1/2}$ regularity of the Neumann data would require continuity over the interface between $\Gamma$ and $\Sigma$. Nevertheless, it can be shown (see the appendix of \cite{shapederiv}), that $\tilde{g}(t)$ lies in $H^{s}(\partial\Omega)$ for all $0< s< \frac12$ and that
\[
|\tilde{g}(t)|_{H^s(\partial\Omega)} \leq C_7 \left(|g_t(t)|_{H^s(\Gamma)} + \beta|\mbox{tr}_\Sigma\psi_{tt}(t)|_{H^s(\Sigma)}\right)
 \]
holds;~see \cite[Corollary 1.4.4.5.]{Grisvard} and~\cite[Appendix]{shapederiv}. Hence, elliptic regularity for the Neumann problem \eqref{Neumannproblemz} yields
\[
\begin{aligned}
\|\psi_t\|_{L^\infty H^{3/2+s}} \leq& \, C_8 \|\tilde{g}\|_{L^\infty H^s(\partial\Omega)}\\[1mm]
\leq& \, C_7 C_8 (\|g_t\|_{L^\infty H^s(\Gamma)} + \beta\|\mbox{tr}_\Sigma\psi_{tt}(t)\|_{L^\infty H^s(\Sigma)})\\[1mm]
\leq& \, C_7 C_8 (\|g_t\|_{L^\infty H^s(\Gamma)} + \beta C_{tr} \|\psi_{tt}\|_{L^\infty H^1}),
\end{aligned}
\]
which can be further estimated by the previous and the additional energy estimates.
\end{proof}
By relying on the results of Theorem~\ref{th:wellposedness_lin_lower_ABC}, Theorems~\ref{thm:JMGT} and \ref{th:limits} can be extended in a straightforward manner.  Note that we only need the case $f=0$, $\alpha=1-2k\psi_t$ of estimate \eqref{est_ttt}, since contractivity is already established in a weaker norm. 
\begin{theorem}\label{thm:JMGT_ABC}
	Let $c^2$, $b$, $\beta$, $\tau>0$, $k\in\mathbb{R}$, and let $T>0$. 
    Assume that 
$g \in H^4(0,T; H^{-3/2}(\Gamma))\cap H^3(0,T; H^{-1/2}(\Gamma))\cap H^2(0,T;H_0^{1/2}(\Gamma))$,
    with $(g, g_t, g_{tt})\vert_{t=0}=(0,0, 0)$, and that
	\begin{equation*}
	\begin{aligned}
\|g_t\|^2_{H^1 H^{1/2}}+\|g\|^2_{H^3H^{-1/2}}+\tau^2\|g_{ttt}\|^2_{H^1 H^{-1/2}} 
+\tau^2 \|g_{tttt}\|^2_{L^2 H^{-3/2}}
\leq \varrho.
	\end{aligned}
	\end{equation*}
Then for sufficiently small $\varrho$, there exists a unique solution $\psi$ of \eqref{ibvp_nonlinear_ABC}	in the weak $(H^1)^\star$ sense that satisfies
	\begin{equation*}
	\begin{aligned}
	\psi \in \,  W^{1, \infty}(0;T;H^{3/2+s}(\Omega)) \cap W^{2, \infty}(0,T; H^1(\Omega))
\cap H^3(0,T;L^2(\Omega))
	\end{aligned}
	\end{equation*}
for any $s\in(0,\frac12)$, and the estimate
\begin{equation*}
\begin{aligned}
& 
\tau^2 \|\psi_{ttt}\|_{L^2L^2}+\tau\|\psi_{tt}\|^2_{L^\infty H^1}
+\|\psi_{tt}\|^2_{L^2 H^1}+\|- \Delta \psi_t\|^2_{L^\infty L^2}\\[1mm]
&\qquad+ c^2\beta\|\textup{tr}\,\psi_{tt}\|_{L^2 L^2(\Sigma)}^2+ b\beta\|\textup{tr}\,\psi_t\|_{L^\infty L^2(\Sigma)}^2\\[1mm]
\leq&\,C(T) \begin{multlined}[t]
 \left(\|g_t\|^2_{L^{\infty} H^{1/2}}+\|g\|^2_{H^2H^{-1/2}}+\tau \|g_{tt}\|^2_{L^\infty H^{-1/2}}\right. \\ \left. +\tau^2\|g_{ttt}\|^2_{L^2 H^{-1/2}}
+\tau^2 \|g_{tttt}\|^2_{L^2 H^{-3/2}}\right).\end{multlined}
\end{aligned}
\end{equation*}
\end{theorem}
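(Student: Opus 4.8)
The plan is to run the same fixed-point scheme as in Theorem~\ref{thm:JMGT}, but with the linear ingredients replaced by their mixed Neumann--absorbing counterparts: Theorem~\ref{th:wellposedness_lin_higher_ABC} for the self-mapping property and Theorem~\ref{th:wellposedness_lin_lower_ABC} for contractivity. I would introduce the operator $\mathcal{F}:v\mapsto\psi$, where for $v$ in a ball $\mathcal{B}$ the image $\psi$ is the unique solution of the linearized problem \eqref{ibvp_linear_ABC} with $f=0$ and coefficient $\alpha=1-2kv_t$. Since in the mixed setting one no longer recovers $H^2(\Omega)$ regularity of $\psi_t$ from $L^2$-boundedness of $-\Delta\psi_t$, the ball is taken in the weaker energy topology carried by Theorem~\ref{th:wellposedness_lin_higher_ABC}, augmented by the fractional spatial regularity of $v_t$ and the $\tau$-uniform control of $v_{tt}$ provided by \eqref{est_ttt}:
\[
\begin{aligned}
\mathcal{B}=\{v:\ &\tau^2\|v_{ttt}\|_{L^2 L^2}^2+\|v_{tt}\|^2_{L^\infty H^1}+\|v_{tt}\|^2_{L^2 H^1}+\|-\Delta v_t\|^2_{L^\infty L^2}\\
&+\|v_t\|^2_{L^\infty H^{3/2+s}}\leq M,\quad (v,v_t,v_{tt})\vert_{t=0}=(0,0,0)\}.
\end{aligned}
\]
The claimed regularity $\psi\in W^{1,\infty}H^{3/2+s}\cap W^{2,\infty}H^1\cap H^3 L^2$ is then read off directly from the ball norm of the fixed point.

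For the self-mapping property I would apply Theorem~\ref{th:wellposedness_lin_higher_ABC} with $\alpha=1-2kv_t$. Its hypotheses are verified for $v\in\mathcal{B}$ through the Sobolev embeddings: from $v_t\in L^\infty H^{3/2+s}$ one gets $\alpha\in L^\infty(W^{1,3}\cap L^\infty)$ via $H^{3/2+s}(\Omega)\hookrightarrow L^\infty(\Omega)$ and $H^{1/2+s}(\Omega)\hookrightarrow L^3(\Omega)$, while the additional requirement $\alpha\in W^{1,\infty}(0,T;L^3(\Omega))$ follows from $\alpha_t=-2kv_{tt}$ together with $\|v_{tt}\|_{L^\infty H^1}\leq\sqrt{M}$ and $H^1(\Omega)\hookrightarrow L^3(\Omega)$. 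The non-degeneracy $\alpha\geq\ul{\alpha}>0$ and the smallness condition $\|\nabla\alpha\|_{L^\infty L^3}<\ul{\alpha}/(6C_{H^1,L^6})$ are secured exactly as in Theorem~\ref{thm:JMGT}, from
\[
\|1-\alpha\|_{L^\infty L^\infty}\leq 2|k|\,C\,\|v_t\|_{L^\infty H^{3/2+s}}\leq 2|k|\,C\sqrt{M},\qquad \|\nabla\alpha\|_{L^\infty L^3}\leq 2|k|\,C\,\|v_t\|_{L^\infty H^{3/2+s}}\leq 2|k|\,C\sqrt{M},
\]
which are small for small $M$. The energy estimate of Theorem~\ref{th:wellposedness_lin_higher_ABC}, its $H^{3/2+s}$ regularity conclusion, and one further application of \eqref{est_ttt} (in the case $f=0$, $\alpha=1-2k\psi_t$) to bound $\|\psi_{tt}\|_{L^\infty H^1}$ then give $\|\mathcal{F}(v)\|_{\mathcal{B}}^2\leq C(M)\varrho$. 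Note that $\|\alpha_t\|_{L^\infty L^3}$ appears on the right-hand side of \eqref{est_ttt} but is already controlled by $2|k|C\sqrt{M}$, so no circularity arises; choosing $\varrho$ small relative to $M$ yields $\mathcal{F}(v)\in\mathcal{B}$.

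Contractivity is handled in the weaker, $\tau$-independent $H^1$ norm, just as in Theorem~\ref{thm:JMGT}. For $v^{(1)},v^{(2)}\in\mathcal{B}$, the difference $\psi=\mathcal{F}(v^{(1)})-\mathcal{F}(v^{(2)})$ solves \eqref{contractivity_lower}, whose right-hand side is bounded as in \eqref{contractivity_lower_estrhs} with $L=2|k|$; the factor $\|\psi^{(2)}_{tt}\|_{L^2 L^4}\leq C\sqrt{M}$ is supplied by the self-mapping property via $L^2 H^1\hookrightarrow L^2 L^4$. Applying Theorem~\ref{th:wellposedness_lin_lower_ABC} (the $\tau$-independent estimate, now carrying the boundary terms $c^2\beta\|\textup{tr}\,\psi_{tt}\|^2_{L^2 L^2(\Sigma)}+b\beta\|\textup{tr}\,\psi_t\|^2_{L^\infty L^2(\Sigma)}$) shows that $\mathcal{F}$ is a contraction in that norm once $M$ is decreased if necessary. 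Closedness of $\mathcal{B}$ with respect to the contraction norm follows from weak-$\star$ lower semicontinuity of the stronger norm, which is the norm on the dual of a separable space, exactly as argued for Theorem~\ref{thm:JMGT}; Banach's fixed-point theorem then delivers the unique solution, and the stated estimate is precisely the self-mapping bound.

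The principal obstacle is the loss of $H^2(\Omega)$ regularity at the interface between $\Gamma$ and $\Sigma$: one must instead operate with the fractional regularity $\psi_t\in L^\infty H^{3/2+s}$ coming from the elliptic estimate in Theorem~\ref{th:wellposedness_lin_higher_ABC}, and check that this still embeds into $L^\infty(\Omega)$ and $W^{1,3}(\Omega)$ so that non-degeneracy of $1-2k\psi_t$ and the smallness of $\|\nabla\alpha\|_{L^\infty L^3}$ remain available. The second delicate point is verifying the extra hypothesis $\alpha\in W^{1,\infty}(0,T;L^3(\Omega))$ of Theorem~\ref{th:wellposedness_lin_higher_ABC}, for which the $\tau$-uniform $L^\infty H^1$ bound on the second time derivative from \eqref{est_ttt} is exactly what is needed.
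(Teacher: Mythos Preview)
Your proposal is correct and matches the paper's approach: the paper states only that Theorems~\ref{thm:JMGT} and~\ref{th:limits} ``can be extended in a straightforward manner'' using Theorems~\ref{th:wellposedness_lin_lower_ABC} and~\ref{th:wellposedness_lin_higher_ABC}, and notes that \eqref{est_ttt} is needed only in the case $f=0$, $\alpha=1-2k\psi_t$ since contractivity is handled in the weaker norm. You have accurately reconstructed the details the paper leaves implicit, including the replacement of $H^2$ by $H^{3/2+s}$ regularity in the ball, the verification of the extra hypothesis $\alpha\in W^{1,\infty}(0,T;L^3(\Omega))$ via $v_{tt}\in L^\infty H^1$, and the use of \eqref{est_ttt} to close the self-mapping argument (though note that in the self-mapping step the coefficient is $\alpha=1-2kv_t$, which becomes $1-2k\psi_t$ only at the fixed point).
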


\begin{theorem} \label{th:limits_ABC}
	Let $c^2$, $b$, $\beta$, $T>0$, $\bar{\tau}>0$, and $k \in \mathbb{R}$. Then there exist $\varrho>0$ such that for all 
$g \in H^4(0,T; H^{-3/2}(\Gamma))\cap H^3(0,T; H^{-1/2}(\Gamma))\cap H^2(0,T;H_0^{1/2}(\Gamma))$,
    that satisfy $(g, g_t, g_{tt})\vert_{t=0}=(0,0, 0)$ and 
	\begin{equation*}
	\begin{aligned}
\|g_t\|^2_{H^1 H^{1/2}}+\|g\|^2_{H^3H^{-1/2}}+\bar{\tau}^2\|g_{ttt}\|^2_{H^1 H^{-1/2}} 
\leq \varrho,
	\end{aligned}
	\end{equation*}
    for any $s\in(0,\frac12)$, the family $(\psi^\tau)_{\tau\in(0,\bar{\tau})}$ of solutions to \eqref{ibvp_nonlinear_ABC} 
	according to Theorem \ref{thm:JMGT_ABC} converges weakly-$\star$ in $$\bar{X}^W=\{v\in H^2(0,T;H^1(\Omega))\cap W^{1,\infty}(0,T;H^{3/2+s}(\Omega)): \, v(0)=0, \, v_t(0)=0\}$$
	 to a solution $\bar{\psi}\in \bar{X}^W$ of \eqref{Westervelt} with homogeneous initial
    conditions $\bar{\psi}(0)=0$, $\bar{\psi}_t(0)=0$, and mixed Neumann -- absorbing boundary conditions $\frac{\partial \bar{\psi}}{\partial n}\vert_{\Gamma}=g$, $\frac{\partial \bar{\psi}}{\partial n}\vert_\Sigma=-\beta\bar{\psi}_t$.
\end{theorem}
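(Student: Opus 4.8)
The plan is to mirror the proof of Theorem~\ref{th:limits}, now carrying along the boundary integrals over $\Sigma$ and working with the reduced spatial regularity $H^{3/2+s}$ in place of $H^2$. The starting point is the $\tau$-uniform energy estimate of Theorem~\ref{thm:JMGT_ABC}: for $\varrho$ sufficiently small, its $\tau$-independent left-hand side bounds $\|\psi^\tau\|_{\bar{X}^W}$ together with the boundary traces $\|\mbox{tr}_\Sigma\psi_t^\tau\|_{L^\infty L^2(\Sigma)}$ and $\|\mbox{tr}_\Sigma\psi_{tt}^\tau\|_{L^2 L^2(\Sigma)}$, none of which depend on $\tau\in(0,\bar\tau)$. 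Consequently I may select a sequence $\tau_\ell\searrow0$ and a limit $\bar\psi\in\bar{X}^W$ along which the family $\psi^\ell:=\psi^{\tau_\ell}$ subconverges.

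First I would extract, by weak-$\star$ compactness in $\bar{X}^W$ and in the boundary-trace spaces, the convergences
\begin{equation*}
\begin{alignedat}{4}
\psi_{tt}^\ell &\relbar\joinrel\rightharpoonup \bar{\psi}_{tt} &&\text{ weakly}  &&\text{ in } &&L^2(0,T; H^1(\Omega)),  \\
\psi_t^\ell &\relbar\joinrel\rightharpoonup \bar{\psi}_t &&\text{ weakly-$\star$}  &&\text{ in } &&L^\infty(0,T;H^{3/2+s}(\Omega)),  \\
\psi_t^\ell &\relbar\joinrel\to \bar{\psi}_t &&\text{ strongly } &&\text{ in } &&L^\infty(0,T; L^4(\Omega)),\\
\mbox{tr}_\Sigma\psi_{tt}^\ell &\relbar\joinrel\rightharpoonup \mbox{tr}_\Sigma\bar{\psi}_{tt} &&\text{ weakly} &&\text{ in } &&L^2(0,T; L^2(\Sigma)),\\
\mbox{tr}_\Sigma\psi_t^\ell &\relbar\joinrel\rightharpoonup \mbox{tr}_\Sigma\bar{\psi}_t &&\text{ weakly-$\star$} &&\text{ in } &&L^\infty(0,T; L^2(\Sigma)).
\end{alignedat}
\end{equation*}
The trace limits are identified through weak continuity of $\mbox{tr}_\Sigma:L^2(H^1)\to L^2(L^2(\Sigma))$. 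The strong convergence on the third line is obtained from the Aubin--Lions--Simon lemma applied to the bounds of $\psi_t^\ell$ in $L^\infty(H^{3/2+s})$ and of $\psi_{tt}^\ell$ in $L^2(L^2)$, using the compact embedding $H^{3/2+s}(\Omega)\hookrightarrow\hookrightarrow L^4(\Omega)$; this is exactly what is needed to pass to the limit in the quadratic nonlinearity.

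Next I would pass to the limit in the weak formulation of \eqref{ibvp_nonlinear_ABC}, tested against $v\in C_0^\infty(0,T;C^\infty(\overline{\Omega}))$ which need not vanish on $\Sigma$, so that the boundary integrals over $\Sigma$ are retained and the absorbing condition is automatically encoded in the limiting identity. Using $b=\delta+\tau_\ell c^2$, the terms $\tau_\ell\psi_{ttt}^\ell$ and $\tau_\ell c^2\Delta\psi_t^\ell$ vanish because their factors are uniformly bounded in $\bar{X}^W$. The interior quadratic term, appearing against $v_t$ after integration by parts in time, is treated by writing $(\psi_t^\ell)^2-\bar\psi_t^2=(\psi_t^\ell+\bar\psi_t)(\psi_t^\ell-\bar\psi_t)$ and combining the strong $L^\infty(L^4)$ convergence of the second factor with the uniform boundedness of the first. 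Finally, the boundary contributions $c^2\beta\,\mbox{tr}_\Sigma\psi_t^\ell$ and $b\beta\,\mbox{tr}_\Sigma\psi_{tt}^\ell$ converge to $c^2\beta\,\mbox{tr}_\Sigma\bar\psi_t$ and $\delta\beta\,\mbox{tr}_\Sigma\bar\psi_{tt}$, respectively, by the trace convergences above together with $b\to\delta$. The resulting identity is precisely the weak formulation of the Westervelt equation \eqref{Westervelt} carrying Neumann data $g$ on $\Gamma$ and the absorbing condition $\frac{\partial\bar\psi}{\partial n}=-\beta\bar\psi_t$ on $\Sigma$.

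The hard part will be the boundary term $b\beta\,\mbox{tr}_\Sigma\psi_{tt}^\ell$, which is genuinely new compared to Theorem~\ref{th:limits}: it involves the trace of the \emph{second} time derivative, and its limit passage hinges on a $\tau$-uniform control of $\|\mbox{tr}_\Sigma\psi_{tt}^\ell\|_{L^2 L^2(\Sigma)}$. This control is not automatic; it is exactly the extra boundary energy furnished by Theorem~\ref{thm:JMGT_ABC}, ultimately traceable to the higher-order estimate \eqref{est_ttt} of Theorem~\ref{th:wellposedness_lin_higher_ABC}. One must also check that the reduced $H^{3/2+s}$ regularity still yields enough compactness for the strong $L^\infty(L^4)$ convergence, which holds since $H^{3/2+s}\hookrightarrow\hookrightarrow L^4$ remains valid for $s>0$ in space dimension $\leq 3$. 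Once the limiting equation and both boundary conditions are secured, uniqueness of the weak solution of \eqref{Westervelt} (cf.~\cite{KL09Westervelt,MW11}, adapted to mixed Neumann--absorbing data) forces every weakly-$\star$ convergent subsequence to share the limit $\bar\psi$, and a subsequence--subsequence argument upgrades the convergence to the whole family $(\psi^\tau)_{\tau\in(0,\bar\tau)}$.
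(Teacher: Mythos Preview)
Your proposal is correct and follows the route the paper intends: it mirrors the proof of Theorem~\ref{th:limits}, supplementing it with the boundary-trace convergences needed to retain the absorbing condition on $\Sigma$ in the limit, and using the reduced $H^{3/2+s}$ regularity (still compactly embedded in $L^4$) for the Aubin--Lions step. One small point of attribution: the $\tau$-uniform bound on $\|\mbox{tr}_\Sigma\psi_{tt}^\tau\|_{L^2L^2(\Sigma)}$ that you correctly identify as the key new ingredient already arises from the lower-order energy identity (testing with $\psi_{tt}$, see the added terms listed before Theorem~\ref{th:wellposedness_lin_lower_tau_ABC}), not from the higher-order estimate \eqref{est_ttt}; the latter is needed for $\|\psi_{tt}\|_{L^\infty H^1}$ and hence the $H^{3/2+s}$ regularity of $\psi_t$, but the trace control of $\psi_{tt}$ is cheaper than you suggest.
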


\section*{Acknowledgments}
The second author gratefully acknowledges the funding provided by the Deutsche Forschungsgemeinschaft under the grant number WO 671/11-1.
\bibliographystyle{plain}
\bibliography{references}
\end{document}